\numberwithin{equation}{section}
\theoremstyle{plain}
\newtheorem{theorem}{Theorem}[section]
\newtheorem{lemma}{Lemma}[section]
\newtheorem{corollary}{Corollary}[section]
\theoremstyle{definition}
\newtheorem{example}{Example}[section]
\newtheorem{remark}{Remark}[section]
\def\now{%
\minute=\time%
\hour=\time \divide \hour by 60%
\hourMins=\hour \multiply\hourMins by 60%
\advance\minute by -\hourMins%
\zeroPadTwo{\the\hour}:\zeroPadTwo{\the\minute}%
}
\def\zeroPadTwo#1{\ifnum #1<10 0\fi#1}
\renewcommand{\cite}{\citet}
\def\^#1{\ifmmode {\mathaccent"705E #1} \else {\accent94 #1} \fi}
\def\~#1{\ifmmode {\mathaccent"707E #1} \else {\accent"7E #1} \fi}
\def\*#1{#1^\ast}
\edef\-#1{\noexpand\ifmmode {\noexpand\bar{#1}} \noexpand\else \-#1\noexpand\fi}
\def\>#1{\vec{#1}}
\def\.#1{\dot{#1}}
\def\atop{\@@atop}
\def\*#1{\mathscr{#1}}
\renewcommand{\leq}{\leqslant}
\renewcommand{\geq}{\geqslant}
\newcommand{\eq}{\eqref}
\newcommand{\IE}{\mathbbm{E}}
\newcommand{\IP}{\mathbbm{P}}
\def\be#1{\begin{equation*}#1\end{equation*}}
\def\ben#1{\begin{equation}#1\end{equation}}
\def\bes#1{\begin{equation*}\begin{split}#1\end{split}\end{equation*}}
\def\besn#1{\begin{equation}\begin{split}#1\end{split}\end{equation}}
\def\ceil#1{{\lceil#1\rceil}}
\def\beqn#1\eeqn{\begin{align}#1\end{align}}
\def\beq#1\eeq{\begin{align*}#1\end{align*}}
\def\E{{\IE}}
\def\P{{\IP}}
\def\min{{\rm min}}
\def\I{{\rm I}}
\def\blfootnote{\xdef\@thefnmark{}\@footnotetext}
\begin{document}

\begin{frontmatter}
\title{Poisson Approximation for Two Scan Statistics with Rates of Convergence}

\runtitle{Poisson Approximation for Two Scan Statistics}

\begin{aug}
\author{\fnms{Xiao}
\snm{Fang}\thanksref[1]{t1}\ead[label=e1]{stafx@nus.edu.sg}}
and
\author{\fnms{David} \snm{Siegmund}\thanksref[2]{t2}\ead[label=e2]{siegmund@stanford.edu}}
\thankstext[1]{t1}{Partially supported by the NUS-Overseas Postdoctoral Fellowship from the National University of Singapore.}
\thankstext[2]{t2}{Partially supported by the National Science Foundation.}


 \runauthor{X. Fang  and D. Siegmund}

\affiliation{National University of Singapore and Stanford University}

\address{Department of Statistics and Applied Probability\\
National University of Singapore\\
6 Science Drive 2\\
Singapore 117546 \\
Republic of Singapore\\
\printead{e1}\\
\phantom{E-mail:\ }}

\address{Department of Statistics\\
Sequoia Hall\\
390 Serra Mall\\
Stanford University\\
Stanford, CA 94305-4065\\
\printead{e2}\\
\phantom{E-mail:\ }}

\end{aug}

\begin{abstract} 
As an application of Stein's method for 
Poisson approximation, we prove rates of convergence
for the tail probabilities of two 
scan statistics that have been suggested  
for detecting local signals in 
sequences of independent random variables subject to
possible change-points.  Our formulation
deals simultaneously with ordinary and with large deviations.

\end{abstract}

\begin{keyword}[class=AMS]
\kwd[Primary ]{60F05,62E20} \kwd[; secondary ]{62L10}
\end{keyword}

\begin{keyword}
\kwd{Stein's method}
\kwd{Poisson approximation}
\kwd{total variation distance}
\kwd{relative error}
\kwd{rate of convergence}
\kwd{scan statistics}
\kwd{change-point analysis}
\kwd{exponential family}
\end{keyword}

\end{frontmatter}

\section{Introduction}

Let $\{X_1,\dots, X_n\}$ be a sequence of random variables. A widely 
studied problem is to test the hypothesis that the $X$'s are independent 
and identically distributed against the alternative that for some 
$0\leq i<j\leq n$, $\{X_{i+1},\dots, X_j\}$ have a distribution that differs 
from the distribution of the other $X$'s. If $t:=j-i$ is assumed known and the
change in distribution is a shift in the mean, one common suggestion 
to detect the change is
the statistic
\ben{\label{0.1}
M_{n; t}=\max_{1\leq i\leq n-t+1} (X_i+\dots +X_{i+t-1}).
}
See \cite{GlNaWa01} for an introduction to scan statistics.

When $t$ is unknown but the distributions of the $X$'s are otherwise
completely specified, the maximum log likelihood ratio statistic is
\ben{\label{0.2}
\max_{0\leq i<j \leq n}(S_j-S_i)
}
where
\ben{\label{0.3}
S_i=\sum_{k=1}^i \log[f_1(X_k)/f_0(X_k)]
}
and $f_0$ ($f_1$ resp.) is the density function of $X$ under the null hypothesis (alternative hypothesis resp.).
Appropriate  statistics when the
distributions involve unknown parameters can be found,
for example, in \cite{Ya93}.

Asymptotic $p$ values of test statistics \eq{0.1} and \eq{0.2} have  
been derived as $n\rightarrow \infty$ under certain distributional 
assumptions on $X_1$.
See, for example, \cite{ArGoWa90}, \cite{Ha07} and \cite{Si88}.
The statistic \eq{0.2} has also been studied for its role
in queueing theory, where it has the interpretation of the
maximum waiting time among the first $n$ customers of a
single server queue (cf. \cite{Ig72}). 
However, except for \eq{0.1} in the special case when
$X_1$ is a Bernoulli variable (cf. \cite{ArGoWa90} and \cite{Ha07}),
and for \eq{0.2} when the problem is scaled so that the probability is approximately zero (cf. \cite{Si88}),
the rate of convergence for 
these approximations is unknown. In this paper, we establish rate of 
convergence of tail approximations for both statistics \eq{0.1} and \eq{0.2} 
under the assumption that $X_1$ comes from an exponential 
family of distributions.  
The error in our approximation is relative error, hence
is applicable when the probability is small as well as when it converges to a 
positive limit.

In practice simulations have been widely used to justifiy the accuracy
of the approximations suggested here.  
The sample size $n$ used in those simulations is typically a few thousands at most, partly because 
the simulation would take too long for larger $n$.
We have not seen any related work trying to infer a convergence rate by simulation results.
The constants arising from
our calculations are undoubtedly much too large to be an
alternative source to  justify use of the
approximations in practice.  We view the value of our approximations as
providing
understanding of the relations of various parameters involved in the
approximations, and in particular the uniformity of the validity
of the approximation for both large and ordinary deviations. 

In the next section, we state our main results. Section 3 contains an introduction to our main technique, Stein's method, and the proof of our main results. We discuss related problems in Section 4.

\section{Main results}

\subsection{Scan statistics with fixed window size}

Let $\{X_1,\dots, X_n\}$ be independent,
identically distributed  random variables with distribution
function $F$ and $\E X_1=\mu_0$. 
Suppose the distribution of $X_1$ can
be imbedded in  an exponential family of probability measures $\{F_\theta: \theta\in \Theta\}$ where 
$\Theta$ is an open interval in $\mathbb{R}$ containing $0$, and
\ben{\label{1.2}
d F_\theta(x)=e^{\theta x-\Psi(\theta)} dF(x).
}
It is known that the mean and variance of $F_{\theta}$ are $\Psi'(\theta)$ 
and $\Psi''(\theta)$ respectively. We assume $F_\theta(x)$ is non-degenerate, i.e., $\Psi''(\theta)>0$ for all $\theta\in \Theta$. From $X_1\sim F$ and $\E X_1=\mu_0$, we have $\Psi(0)=0$ and $\Psi'(0)=\mu_0$.

Let $a>\mu_0$ be given. Assume that there exists $\theta_a\in \Theta$ 
such that $\Psi'(\theta_a) = a$.
For a positive integer $t< n$,
and for $M_{n; t}$ defined in \eq{0.1},
we are interested in calculating approximately 
the probability $\P(M_{n;t}\geq at)$.   
In the following theorem, we provide a Poisson approximation result with 
rate of convergence. 
We consider the following two cases: 
\begin{itemize}
\item[Case 1:] There exists $\theta_{a'}\in \Theta$ such that $\theta_{a'}>\theta_a$ (thus $\Psi'(\theta_{a'})=a'>a$) and
\ben{\label{702}
\sup_{\theta_a\leq \theta \leq \theta_{a'}} \int_{-\infty}^\infty |\varphi_\theta (x)|^{\nu}dx<\infty\ \text{for some positive integer}\  \nu,
}
where $\varphi_\theta$ is the characteristic function of $F_{\theta}$.

\item[Case 2:] $X_1$ is an integer-valued random variable with span $1$, where the span is defined to be the largest value of $\Delta$ such that
\ben{\label{701}
\sum_{k\in \mathbb{Z}}\P(X_i= k\Delta +w)=1 \ \text{for some}\  w\in \mathbb{Z}.
 }
\end{itemize}
We remark that \eq{702} is a smoothness condition on $F$ (cf. Condition 1.4 of \cite{DiFr88}). Note also that any lattice random variable, i.e., that satisfying \eq{701} with $w\in \mathbb{R}$ instead of $w\in \mathbb{Z}$, can be reduced to Case 2 by linear transformation.

For the statement in the following theorem, we define $\sigma_a^2=\Psi''(\theta_a)$ and $\lceil at \rceil=\inf \{v\in \mathbb{Z}: v\geq at \}$. Let $\{X_i, X_i^a: i\geq 1\}$ be independent with $X_i\sim F$ and $X_i^a\sim F_{\theta_a}$, and let $D_k=\sum_{i=1}^k (X_i^a-X_i)$ for $k\geq 1$.


\begin{theorem}\label{t1}
Under the assumptions given above,
for some constant $C$ depending only on the exponential family \eq{1.2}, 
$\mu_0$, and $a$, we have
\ben{\label{t1-1}
\big| \P(M_{n; t}\geq at) - (1-e^{-\lambda}) \big|\leq C(\frac{(\log t)^2}{t}+\frac{(\log t\wedge \log(n-t))}{n-t}) (\lambda \wedge 1),
}
where for Case 1,
\ben{\label{t1-2}
\lambda=\frac{ (n-t+1) e^{-[a\theta_a-\Psi(\theta_a)]t}}{\theta_a \sigma_a (2\pi t)^{1/2}}\exp[-\sum_{k=1}^\infty \frac{1}{k} \E(e^{-\theta_a D_k^+})],
}
and for Case 2,
\besn{\label{t1-3}
&\lambda=\frac{ (n-t+1) e^{-(a\theta_a-\Psi(\theta_a))t} e^{-\theta_a (\lceil at\rceil -at)}}
{(1-e^{-\theta_a})\sigma_a (2\pi t)^{1/2}}\exp[-\sum_{k=1}^\infty \frac{1}{k} \E(e^{-\theta_a D_k^+})].
}
\end{theorem}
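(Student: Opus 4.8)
The plan is to view the $t$-windows as a long-range-independent family of rare events whose clusters form a Poisson process, and then to locate the cluster rate by a local-limit/renewal analysis, which is exactly where the factor $\exp[-\sum_k\frac1k\E(e^{-\theta_a D_k^+})]$ enters.

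\emph{Set-up and reduction.} Write $A_i=\{X_i+\dots+X_{i+t-1}\geq at\}$, $I_i=\mathbbm{1}_{A_i}$ for $1\leq i\leq n-t+1$, so that $\P(M_{n;t}\geq at)=\P(W\geq1)$ with $W=\sum_iI_i$. Because the $A_i$ cluster, $W$ is not approximately Poisson; instead I would count clusters via
\be{\bar I_i=I_i\prod_{i<j\leq\min(i+t-1,\,n-t+1)}(1-I_j),\qquad \bar W=\sum_i\bar I_i,\qquad \bar\lambda=\E\bar W.}
Three facts are immediate: (i) $\{W\geq1\}=\{\bar W\geq1\}$ exactly, since the largest $i$ with $I_i=1$ has $\bar I_i=1$; (ii) $\bar I_i\bar I_j=0$ whenever $0<|i-j|<t$, because one factor forces $I_j=0$ and the other forces $I_j=1$; (iii) $\bar I_i$ is a function of $(X_i,\dots,X_{i+2t-2})$. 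Writing $\mathrm{rate}=(\log t)^2/t+(\log t\wedge\log(n-t))/(n-t)$ and using $|e^{-\bar\lambda}-e^{-\lambda}|\leq|\bar\lambda-\lambda|\,\lambda^{-1}(\lambda\wedge1)$ (valid once $|\bar\lambda-\lambda|\leq\tfrac12\lambda$), it suffices to show $\dtv(\bar W,\Po(\bar\lambda))\leq C(\mathrm{rate})(\lambda\wedge1)$ and $|\bar\lambda-\lambda|\leq C(\mathrm{rate})\lambda$.

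\emph{The Stein estimate (the easy half).} I would apply the local approach of Stein's method for Poisson approximation to $\bar W$ with dependence neighbourhoods $B_i=\{j:|i-j|\leq 2t-2\}$. By (iii) the term gauging dependence on $\{\bar I_j:j\notin B_i\}$ vanishes, and $\dtv(\bar W,\Po(\bar\lambda))\leq(b_1+b_2)(1-e^{-\bar\lambda})/\bar\lambda$ with $b_1=\sum_i\sum_{j\in B_i}\E\bar I_i\,\E\bar I_j$ and $b_2=\sum_i\sum_{j\in B_i,\,j\neq i}\E(\bar I_i\bar I_j)$. By (ii), $b_2$ only involves pairs with $t\leq|i-j|\leq 2t-2$, for which the windows $i,j$ are disjoint, so $\E(\bar I_i\bar I_j)\leq\E(I_iI_j)=p^2$ with $p:=\P(A_1)\leq e^{-[a\theta_a-\Psi(\theta_a)]t}$ (Chernoff). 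Hence $b_1+b_2\leq C(n-t+1)tp^2$, and since $\bar\lambda\asymp(n-t+1)p$ by the next step, $\dtv(\bar W,\Po(\bar\lambda))\leq C\,tp\,(\bar\lambda\wedge1)$, which is negligible against $(\log t)^2/t\cdot(\lambda\wedge1)$ because $tp$ decays faster than any power of $t$. The only remaining work is the estimate of $\bar\lambda$.

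\emph{The cluster rate (the main work).} For interior $i$ (with $i+t-1\leq n-t+1$), $\E\bar I_i=\P(A_i,A_{i+1}^c,\dots,A_{i+t-1}^c)$. Writing $R=(X_i+\dots+X_{i+t-1})-at\geq0$ for the overshoot, a direct computation gives $A_{i+k}=\{(X_i+\dots+X_{i+k-1})-(X_{i+t}+\dots+X_{i+t+k-1})\leq R\}$. Tilting the window-$i$ variables by $\theta_a$ and conditioning on $\bar S_t:=\sum_{k=1}^{t}X_k^a-at$ (the $X_k^a$ i.i.d.\ $F_{\theta_a}$): the conditional joint law of the first $K$ of them, for $K$ up to a fixed multiple of $\log t$, is that of an i.i.d.\ $F_{\theta_a}$-block up to relative error $O(\log t/t)$, while the fresh $X_{i+t},\dots$ are i.i.d.\ $F$, and the difference of these two blocks is precisely the walk $D_k$. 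Truncating $\min_{1\leq k\leq t-1}$ first to $\min_{1\leq k\leq K}$ and then to $\min_{k\geq1}$ costs only $\sum_{k>K}\P(D_k\leq C\log t)=O(t^{-1})$ (using $\E D_1=a-\mu_0>0$); the factor $e^{-\theta_a\bar S_t}$ confines $\bar S_t$ to $[0,C\log t]$ up to $O(t^{-1})$; and on that range the local limit theorem for $\bar S_t$—supplied by the smoothness hypothesis \eqref{702} in Case 1, and by its lattice version in Case 2—gives density $\sigma_a^{-1}(2\pi t)^{-1/2}(1+O((\log t)^2/t))$ uniformly. This yields, in Case 1,
\be{\E\bar I_i=\frac{e^{-[a\theta_a-\Psi(\theta_a)]t}}{\sigma_a\sqrt{2\pi t}}\int_0^\infty e^{-\theta_a y}\,\P\bigl(\min_{k\geq1}D_k>y\bigr)\,dy\,\bigl(1+O((\log t)^2/t)\bigr),}
while in Case 2 the integral is replaced by the sum over the coset of $\mathbb{Z}$, producing $e^{-\theta_a(\lceil at\rceil-at)}/(1-e^{-\theta_a})$ in place of $1/\theta_a$. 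The Wiener--Hopf/Spitzer factorisation now gives the deterministic identity
\be{\theta_a\int_0^\infty e^{-\theta_a y}\,\P\bigl(\min_{k\geq1}D_k>y\bigr)\,dy=1-\E e^{-\theta_a(\min_{k\geq1}D_k)^+}=\exp\Bigl[-\sum_{k\geq1}\tfrac1k\,\E(e^{-\theta_a D_k^+})\Bigr],}
so that $\E\bar I_i=\lambda/(n-t+1)\cdot(1+O((\log t)^2/t))$ for interior $i$, recovering \eqref{t1-2} in Case 1 and \eqref{t1-3} in Case 2. Finally, for the $O(t)$ right-boundary indices the product in $\bar I_i$ is shortened, which only raises $\E\bar I_i$; summing the excesses is controlled because the tail of $\sum_m[\P(\min_{k\leq m}D_k>R)-\P(\min_{k\geq1}D_k>R)]$ decays geometrically, so their total is $O(p)=O(\lambda/(n-t))$, absorbed by the term $(\log t\wedge\log(n-t))/(n-t)\cdot(\lambda\wedge1)$—with the truncations above taken at scale $\log(n-t)$ when $n-t$ is the shorter length. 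Summing over $i$ gives $|\bar\lambda-\lambda|\leq C(\mathrm{rate})\lambda$, as required.

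\emph{Main obstacle.} I expect the hard part to be the uniform local limit theorem with $O((\log t)^2/t)$ \emph{relative} error for the tilted sum $\bar S_t$ on a window of width $\log t$ adjacent to its mean, coupled with the bookkeeping of the $O(\log t/t)$ error from replacing the conditional law (given $\bar S_t$) of the leading $O(\log t)$ window-$i$ variables by a product measure in order to expose the walk $D_k$; making the Spitzer identity in the third display precise, and pushing the lattice local limit theorem through with the correct span-$1$ normalisation, are the remaining delicate points.
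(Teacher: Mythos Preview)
Your overall architecture is essentially the paper's: declump, apply Arratia--Goldstein--Gordon, and identify the cluster rate through exponential tilting, a local limit theorem for the overshoot, the Diaconis--Freedman replacement of the conditional block by an i.i.d.\ $F_{\theta_a}$ block, and Spitzer's identity. The organizational difference is that the paper declumps with a short window $m\asymp\log t\wedge\log(n-t)$ (looking backward, after embedding into a bi-infinite sequence), whereas you declump with the full window $t-1$. Your choice buys you two simplifications: the exact identity $\{W\geq1\}=\{\bar W\geq1\}$ (the paper incurs the boundary loss $m\,\P(T_1\geq at)$ here, which is the source of its $(\log t\wedge\log(n-t))/(n-t)$ term), and a trivial $b_2$ since all surviving pairs have disjoint windows. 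In the paper the $b_2$ term is \emph{not} trivial: for $m<|\beta-\alpha|<t$ the windows overlap, and the paper controls $\P(T_\beta\geq at\mid T_1\geq at)$ via the conditional tail bounds of Koml\'os--Tusn\'ady. The paper also uses an FKG step to pass from $\P(D_i>s-at,\ 1\leq i\leq m)$ to $\P(D_i>s-at,\ i\geq1)$; your union bound $\sum_{k>K}\P(D_k\leq\cdot)$ replaces that and is fine.

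There is, however, a genuine gap in your truncation ``$\min_{1\leq k\leq t-1}\to\min_{1\leq k\leq K}$''. At that stage the relevant walk is \emph{not} $D_k$ but the conditional walk $U_k-V_k$, where $U_k=X_i+\dots+X_{i+k-1}$ has the bridge law $\mathcal L(\,\cdot\mid T_i=at+R)$ and $V_k$ is fresh. Diaconis--Freedman lets you replace $U_k$ by an i.i.d.\ $F_{\theta_a}$ partial sum only for $k\leq K=O(\log t)$; for $K<k\leq t-1$ you cannot invoke $\P(D_k\leq C\log t)$, and your displayed cost $\sum_{k>K}\P(D_k\leq C\log t)$ is therefore not a bound on what you need. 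Concretely, the missing estimate is
\[
\sum_{k=K+1}^{t-1}\P\bigl(A_{i+k}\mid T_i\geq at\bigr)\;\leq\;Ce^{-cK},
\]
which after splitting on $\{V_k>(\mu_0+\delta)k\}$ reduces to a large-deviation bound for the \emph{bridge} partial sum $U_k$ below its mean---exactly the Koml\'os--Tusn\'ady input the paper uses (their Lemma~3.3, equations for $b_2$). The same estimate is needed again in your boundary argument, where you control $\sum_m[\P(\min_{k\leq m}\cdot>R)-\P(\min_{k\geq1}\cdot>R)]$ in terms of $D_k$; the actual excesses involve the conditional walk. In short: by pushing the overlap analysis out of $b_2$ and into the computation of $\E\bar I_i$, you have relocated, not removed, the one hard conditional estimate; once you supply the Koml\'os--Tusn\'ady bound at that spot (and in the boundary sum), your argument goes through and is otherwise a clean variant of the paper's proof.
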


\begin{remark}\label{r2}
The various expressions entering into $\lambda$ will be explained 
below.  Here it is important to note that provided $n-t$ and 
$t$ are large the error
of approximation is relative error, valid when $n$ is
relatively small, so $\lambda$ is near zero, 
and when $\lambda$ is bounded away from zero. 
Although it is possible to 
trace through the proof of Theorem \ref{t1} and obtain a numerical value for the constant $C$ in \eq{t1-1},
it would be too large for practical purposes. Therefore, we do not pursue it here.
\end{remark}

\begin{remark}
\cite{ArGoWa90} obtained a bound for $|\P(M_{n; t}\geq at) - (1-e^{-\lambda})|$ for
independent, identically distributed  Bernoulli random 
variables. They do not restrict the threshold ($at$ in our case) to grow linearly in $t$ with fixed slope.
For fixed $a$, their bound is of the form (cf. equations (11)--(13) of \cite{ArGoWa90})
\be{
C(e^{-ct}+\frac{t}{n})(\lambda\wedge 1).
}
Compared to their result, Theorem \ref{t1} applies to 
more general distributions and recovers typical limit theorems in the literature on 
scan statistics.  As $t, n-t\rightarrow \infty$, Theorem \ref{t1}  
guarantees the 
relative error in \eq{t1-1} goes to $0$. See, for example, Theorem 1 
of \cite{ChZh07}. 
\end{remark}

\begin{remark}\label{r3}  The infinite series appearing in the
definition of $\lambda$ is derived as an application of classical
random walk results of Spitzer.  
It arises probabilistically in the proof of Theorem 2.1 in the
form $\E [1-\exp\{-\theta_a D_{\tau_+}\}]/\E(\tau_+)$,
where $\tau_+ = \inf\{ t : D_t > 0\}$.
The series form is useful for numerical computation.
Let $g(x)=\E e^{i x D_1}$ and $\xi(x)=\log\{1/[1-g(x)]\}$.
\cite{Wo79} proved that for Case 1 of Theorem \ref{t1},
\besn{\label{ssstar1}
\sum_{k=1}^\infty \frac{1}{k} \E(e^{-\theta_a D_k^+})=&-\log[(a-\mu_0)\theta_a]-\frac{1}{\pi}\int_0^\infty \frac{\theta_a^2[I\xi(x)-\frac{\pi}{2}]}{x(\theta_a^2+x^2)} dx \\
&+ \frac{1}{\pi} \int_0^\infty \frac{\theta_a\{R\xi(x)+\log[(a-\mu_0)x]\}}{\theta_a^2+x^2}dx
}
where $R$ and $I$ denote real and imaginary parts.
\cite{TuSi99} proved that for Case 2 of Theorem \ref{t1},
\besn{\label{ssstar2}
&\sum_{k=1}^\infty \frac{1}{k} \E(e^{-\theta_a D_k^+})\\
=& -\log(a-\mu_0)
+\frac{1}{2\pi} \int_0^{2\pi} \left\{ \frac{\xi(x) e^{-\theta_a -ix}}{1-e^{-\theta_a-ix}} 
+\frac{\xi(x) +\log[(a-\mu_0)(1-e^{ix})]}{1-e^{ix}} \right\} dx.
}
The right-hand sides of \eq{ssstar1} and \eq{ssstar2} can be calculated by numerical integration.
For example, \cite{Wo79} calculated the right-hand side of \eq{ssstar1} for normal, gamma and chi-squared distributions, and \cite{TuSi99} calculated the right-hand side of \eq{ssstar2} for binomial distributions.
\end{remark}

\begin{remark}\label{r4}
For $M_{n;t}$ defined in \eq{0.1} and $b>0$, \cite{DeKa92} proposed 
the simple approximation  to $\P(M_{n;t}\geq b)$ given by $1-e^{-\lambda}$, 
where (cf. Theorem 2 of \cite{DeKa92})
\be{
\lambda=(n-t+1)\P(X_1+\dots+ X_t\geq b).
}
Similar approximations have also been considered for more complicated biological models. See \cite{ChKa00}, \cite{KaCh00} and \cite{ChKa07}. Such a simple approximation requires
specific conditions on the relation of $b$,  $t$ and $n$ and does not hold when
$b$ is proportional to $t$, which leads to the `clumping' phenomenon.
See, for example, Section 4.2 of \cite{ArGoGo90} or the book by \cite{Al89}.  In 
applications one must judge whether the
appropriate scaling relations hold for specific values
of $t$ and $b$.  In this regard it is interesting to note that our
approximation becomes the Dembo-Karlin approximation when the scaling relations of
\cite{DeKa92} are satisfied.
\end{remark}

Next, we compute the limiting probability $1-e^{-\lambda}$ in \eq{t1-1} explicitly for normal and Bernoulli random variables. We show that the limiting probability is close to the true probability by using simulation and known results.

\begin{example}
Suppose
$X_1\sim N(0,1)$. We have that $X_1^a\sim N(a,1)$, $D_1\sim N(a, 2)$ and in 
the definition of 
$\lambda$ in \eq{t1-2},
\bes{
\sum_{k=1}^\infty \frac{1}{k} \E(e^{-\theta_a D_k^+})&=2\sum_{k=1}^\infty \frac{1}{k} \Phi(-a\sqrt{k/2})\\
&=-\log[(a-\mu_0)^2\nu(\sqrt{2}a)]
}
where $\Phi$ is the standard normal distribution function and the function $\nu(x)$ was defined in (4.38) of \cite{Si85}.
It was shown there that
$\nu(x)= \exp(-c x) + o(x^2)$ as $x\rightarrow 0$
for $c \approx 0.583$, while $x^2 \nu(x) /2 \rightarrow 1$ as $x \rightarrow \infty$. 
\cite{SiYa07} indicate that a very simple and  good approximation is
\be{
\nu(x)\approx [(2/x)\{\Phi(x/2)-1/2\}]/\{(x/2)\Phi(x/2)+\phi(x/2)\}
} 
where $\phi$ is the standard normal density function.
Table \ref{table1} presents a numerical study with different values of $n, t$ and $a$. The limiting probability $1-e^{-\lambda}$ is denoted by $p_1$. The values of $p_2$ are simulated with 10,000 repetitions each. We can see from the table that $p_1$ is very close to the true probability.
\begin{table}
\begin{center}
\begin{tabular}{| l | l | l | l | l | l | l}
\hline
$n$ & $t$  & $a$ & $p_1$ & $p_2$ \\ \hline
1000 & 50  & 0.2 & 0.9315 & 0.9594\\ \hline
1000 & 50  & 0.4 & 0.2429 & 0.2624\\ \hline
1000 & 50  & 0.5 & 0.0331 & 0.0334 \\ \hline
2000 & 50  & 0.5 & 0.0668  & 0.0672 \\ \hline
\end{tabular}
\end{center}
\caption{Example 2.1}
\label{table1}
\end{table}
\end{example}

\begin{example}\label{example2}
Let $\{X_1,\dots, X_n\}$ be a sequence of independent Bernoulli random variables with $\P(X_i=1)=\mu_0$ for all $i$ where $0<\mu_0<1$. The distribution of $X_1$ can be imbedded in an exponential family of probability measures $\{F_\theta: \theta\in \mathbb{R}\}$ where $F_\theta$ is defined as in \eq{1.2} with
\ben{\label{e1-2}
\Psi(\theta)=\log(1+\frac{\mu_0 e^\theta}{1-\mu_0})+\log(1-\mu_0).
}
For $0<p<1$, define 
\ben{\label{e1-1}
\theta_p=\log(\frac{p}{1-p})-\log(\frac{\mu_0}{1-\mu_0}).
}
It is straightforward to check that
\be{
\Psi'(\theta_p)=p,\ \theta_{\mu_0}=0,\ \Psi(\theta_{\mu_0})=0.
}
Let $1>a>\mu_0$ (see Corollary \ref{c1} for the relatively easier case where $a=1$).
From \eq{t1-3} and \eq{ssstar2}, we have
\besn{\label{e1-3}
\lambda=&\frac{ (n-t+1) e^{-(a\theta_a-\Psi(\theta_a))t} e^{-\theta_a (\lceil at\rceil -at)}}
{(1-e^{-\theta_a})[2a(1-a)\pi t]^{1/2}}\times (a-\mu_0)\\
&\times 
\exp\left(-\frac{1}{2\pi} \int_0^{2\pi} \left\{ \frac{\xi(x) e^{-\theta_a -ix}}{1-e^{-\theta_a-ix}} 
+\frac{\xi(x) +\log[(a-\mu_0)(1-e^{ix})]}{1-e^{ix}} \right\} dx\right),
}
where $\theta_a$ and $\Psi(\theta_a)$ are defined in \eq{e1-1} and \eq{e1-2},
\be{
g(x)=a(1-\mu_0) e^{i x} +(1-a)\mu_0 e^{-i x} +a\mu_0 +(1-a)(1-\mu_0)
}
and $\xi(x)=\log\{1/[1-g(x)]\}$.

Let $t<n$, and let $M_{n;t}$ be defined as in \eq{0.1}.
The bound \eq{t1-1} suggests the following approximation to $\P(M_{n;t}\geq at)$:
\besn{\label{e1-4}
\P(M_{n;t}\geq at)\approx 1-e^{-\lambda},
}
where $\lambda$ is defined in \eq{e1-3}.
Table \ref{table2} presents a numerical study with different values of $n, t$ and $a$. The probability $p_1$ is calculated by the right-hand side of \eq{e1-4}. The values of $p_2$ are found in Table 1 of \cite{Ha07} and are shown there to be very accurate. We can see from the table that $p_1$ is very close to the true probability.
The derivation in \cite{Ha07} uses the distribution function of 
\be{
Z_k:=\max\{T_1,\dots, T_{kt+1}\}\ \text{for}\  k=1\ \text{and}\ 2,
}
where $T_\alpha=X_\alpha+\dots X_{\alpha+t-1}$. However, the distribution functions of $Z_k$ for $k=1$ and $2$ are only known in limited cases. See \cite{Ha00} for another example on Poisson processes. 

\begin{table}
\begin{center}
\begin{tabular}{| l | l | l | l | l | l | l}
\hline
$n$ & $t$ & $\mu_0$ & $a$ & $p_1$ & $p_2$ \\ \hline
7680 & 30 & 0.1 & 11/30 & 0.14097 & 0.14021\\ \hline
7680 & 30 & 0.1 & 0.4 & 0.029614 & 0.029387\\ \hline
15360 & 30 & 0.1 & 0.4 & 0.058458 & 0.058003\\ \hline
\end{tabular}
\end{center}
\caption{Example 2.2}
\label{table2}
\end{table}

\begin{remark}
From the proof of Theorem \ref{t1}, the $\lambda$ in Example \ref{example2} can be reduced to
\be{
\lambda=\frac{ (n-t+1) e^{-(a\theta_a-\Psi(\theta_a))t} e^{-\theta_a (\lceil at\rceil -at)}}
{[2a(1-a)\pi t]^{1/2}}\times (a-\mu_0).
}
The reason is that the intermediate quantity $\lambda_2$ in \eq{101} for Bernoulli random variables can be expressed as
\be{
\lambda_2=(n-t+1) \P(T_1=\ceil{at}) \P(D_i>0, i\geq 1),
}
and 
\bes{
\P(D_i>0, i\geq 1)&=\P(D_1>0)\times \P(D_i\geq 0, i\geq 1)\\
&=a(1-\mu_0)\times [1-\frac{\mu_0(1-a)}{a(1-\mu_0)}]\\
&=a-\mu_0,
}
where the second equation is from a known result for the first visit to $-1$ of a Bernoulli random walk starting from $0$ (see, e.g., page 272 of \cite{Fe68}).
\end{remark}
\end{example}

The following corollary considers the case that $X_i$ is integer-valued and $a$ is the largest value $X_i$ can take. The proof of it, which is deferred to Section 3, is simpler than the proof of Theorem \ref{t1} and the convergence rate we obtain is faster.
\begin{corollary}\label{c1}
Let $\{X_1,\dots, X_n\}$ be independent, identically distributed integer-valued random variables. For integers $t< n$,
define $M_{n; t}$ as in \eq{0.1}. Suppose $a=\sup\{x: p_x:=\P(X_1=x)>0\}$ is finite.
We have, with constants $C$ and $c$ depending only on $p_a$,
\ben{\label{c1-1}
\big| \P(M_{n; t}\geq at) - (1-e^{-\lambda}) \big|\leq C(\lambda \wedge 1) e^{-ct}
}
where
\be{
\lambda=(n-t) p_a^t (1- p_a )+p_a^t.
}
\end{corollary}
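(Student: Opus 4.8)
The plan is to recognize $\{M_{n;t}\ge at\}$ as the event that a long run of the maximal value $a$ appears, to declump this event, and then to apply the local Chen--Stein bound for Poisson approximation set up in Section 3. First, since $a=\sup\{x:p_x>0\}$, every window sum $T_\alpha:=X_\alpha+\dots+X_{\alpha+t-1}$ satisfies $T_\alpha\le at$, with equality precisely when $X_\alpha=\dots=X_{\alpha+t-1}=a$; hence $\{M_{n;t}\ge at\}$ is exactly the event that some block of $t$ consecutive $X$'s all equal $a$.

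To avoid the clumping produced by overlapping runs I would work with the declumped indicators
\[
J_\alpha=\mathbbm{1}\bigl[X_\alpha=\dots=X_{\alpha+t-1}=a\ \text{and}\ (\alpha=1\ \text{or}\ X_{\alpha-1}\neq a)\bigr],\qquad 1\le\alpha\le n-t+1,
\]
which mark the left ends of maximal runs of $a$'s of length $\ge t$. Taking the leftmost such block shows $\{M_{n;t}\ge at\}=\{W\ge 1\}$ with $W=\sum_{\alpha=1}^{n-t+1}J_\alpha$, while $\E J_1=p_a^t$ and $\E J_\alpha=(1-p_a)p_a^t$ for $\alpha\ge 2$ give $\lambda:=\E W=p_a^t+(n-t)(1-p_a)p_a^t$, as in the statement.

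Next I would invoke the Chen--Stein bound with the neighborhoods $B_\alpha=\{\beta:|\alpha-\beta|\le t\}$. The declumping makes the two ``bad'' terms vanish exactly: if $J_\alpha=1$ then $X_\alpha,\dots,X_{\alpha+t-1}$ all equal $a$, which forbids $J_\beta=1$ for any $\beta$ with $0<|\alpha-\beta|\le t$, so $b_2=\sum_\alpha\sum_{\beta\in B_\alpha\setminus\{\alpha\}}\E(J_\alpha J_\beta)=0$; and $J_\alpha$ is a function of $(X_{\alpha-1},\dots,X_{\alpha+t-1})$, while each $J_\beta$ with $\beta\notin B_\alpha$ is a function of $X$'s with indices outside that block, so $J_\alpha$ is independent of $\sigma(J_\beta:\beta\notin B_\alpha)$ and $b_3=0$. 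Only $b_1=\sum_\alpha\E J_\alpha\sum_{\beta\in B_\alpha}\E J_\beta\le(2t+1)p_a^t\lambda$ survives, and pairing it with the Stein factor $(1-e^{-\lambda})/\lambda\le 1$ gives
\[
\dtv\bigl(\law(W),\Po(\lambda)\bigr)\ \le\ C_0\, b_1\,\frac{1-e^{-\lambda}}{\lambda}\ \le\ C_0(2t+1)p_a^t\,(1-e^{-\lambda})\ \le\ C_0(2t+1)p_a^t\,(\lambda\wedge 1),
\]
using $1-e^{-\lambda}\le\lambda\wedge 1$ and a universal constant $C_0$. Since $\bigl|\P(M_{n;t}\ge at)-(1-e^{-\lambda})\bigr|=\bigl|\P(W=0)-e^{-\lambda}\bigr|\le\dtv(\law(W),\Po(\lambda))$, and $C_0(2t+1)p_a^t\le C e^{-ct}$ for any $0<c<\log(1/p_a)$ with $C$ depending only on $p_a$ (here we take the nondegenerate case $p_a<1$; for $p_a=1$ the problem is trivial), the bound \eq{c1-1} follows.

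I do not anticipate a genuine obstacle: the argument runs entirely on the elementary combinatorics of runs of the top value, and the only point needing care is to retain the factor $\lambda\wedge 1$ rather than $\lambda$, which is exactly what the pairing of the Stein factor $(1-e^{-\lambda})/\lambda$ with the bound $b_1\le(2t+1)p_a^t\lambda$ delivers. Compared with Theorem \ref{t1}, no Gaussian $(2\pi t)^{1/2}$ correction and no Spitzer-type series are needed, because staying at the maximal value has exact probability $p_a$ per step; this is why the proof is shorter and the error is exponentially small in $t$.
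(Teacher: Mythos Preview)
Your proposal is correct and follows essentially the same approach as the paper: the same declumped indicators $J_\alpha$ (called $Y_\alpha$ there), the same identification $\{M_{n;t}\ge at\}=\{W\ge 1\}$ with $\E W=\lambda$, and the same Chen--Stein application with $b_2=b_3=0$ and $b_1\le(2t+1)p_a^t\lambda$. The paper's proof is terser (it just records the final bound $(1\wedge\lambda^{-1})(n-t+1)(2t+1)p_a^{2t}$ without spelling out why $b_2$ and $b_3$ vanish), but the argument is the same.
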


\subsection{Scan statistics with varying window size}

Next we study the maximum log likelihood ratio statistic \eq{0.2}.
Suppose in \eq{0.3}, $f_0(x)=dF_{\theta_0}(x)$ and $f_1(x)=dF_{\theta_1}(x)$ where $\{F_\theta: \theta\in \Theta\}$ is an exponential family as in \eq{1.2} and $\theta_0<\theta_1$. Then we have
\be{
S_i=\sum_{k=1}^i \log[f_1(X_k)/f_0(X_k)]=\sum_{k=1}^i (\theta_1-\theta_0)  \big( X_k-\frac{\Psi(\theta_1)-\Psi(\theta_0)}{\theta_1-\theta_0} \big).
}
By appropriate change of parameters and a slight abuse of notation, studying \eq{0.2} is equivalent to studying the following problem.

Let $\{X_1,\dots, X_n\}$ be independent, identically
distributed  random variables with distribution function $F$ that
can be imbedded in an exponential family, as in \eq{1.2}.  Let  $\E X_1=\mu_0<0$. 
Let $S_0=0$ and
$S_i=\sum_{j=1}^i X_j$ for $1\leq i\leq n$.  
We are interested in the distribution of $\max_{0 \leq i < j \leq n} (S_j-S_i).$
Statistics of this form have been widely studied in the context of CUSUM
tests.  Its limiting distribution was derived by \cite{Ig72}, who 
observed that it can be interpreted
as the maximum waiting time of the first $n$ customers in a single server queue.
Genomic applications are  discussed by \cite{KaDeKa90}.

Suppose there exists a positive $\theta_1\in \Theta$ such that
\ben{\label{2.2}
 \Psi'(\theta_1)=\mu_1,\quad \Psi(\theta_1)=0.
}
For $b>0$, in the following theorem we give an approximation to
\ben{\label{2.1}
p_{n,b}:=\IP \big( \max_{0 \leq i<j\leq n}(S_j-S_i)\geq b \big)
}
with an explicit error bound. 
We again consider two cases:
\begin{itemize}
\item[Case 1:] The distribution $F_{\theta_1}$ satisfies
$\int_{-\infty}^\infty |\varphi_{\theta_1} (t)|dt<\infty$.

\item[Case 2:] $X_1$ is an integer-valued random variable not concentrated on the set $\{jd, -\infty<j<\infty\}$ for any $d>1$.
\end{itemize}
In the following, let $\P_\theta(\cdot)$ ($\E_\theta(\cdot)$ resp.) denote the probability (expectation resp.) under which $\{X_1, X_2, \dots\}$ are independent, identically distributed as $F_\theta$.

\begin{theorem}\label{t2}
Let $h(b)>0$ be any function such that
\be{
h(b)\rightarrow \infty, \ h(b)=O(b^{1/2}) \  \text{as}\ b\rightarrow \infty.
}
Suppose $n-b/\mu_1>b^{1/2}h(b)$. Under the above setting, we have, for some constants $c, C$ only depending on the exponential family $F_\theta$ and $\theta_1$,
\ben{\label{t2-1}
\big| p_{n,b}-(1-e^{-\lambda}) \big|\leq C \lambda  \left\{ \Big( 1+\frac{b/h^2(b)}{n-b/\mu_1}  \Big) e^{-c h^2(b)} 
+ \frac{b^{1/2}h(b)}{n-\frac{b}{\mu_1}} \right\}
}
where for Case 1,
\be{
\lambda=(n-\frac{b}{\mu_1})   \frac{e^{-\theta_1 b}}{\theta_1 \mu_1} \exp( -2\sum_{k=1}^{\infty} \frac{1}{k} \E_{\theta_1} e^{-\theta_1  S_k^+ } )  ,
}
and for Case 2 and integers b,
\be{
\lambda=(n-\frac{b}{\mu_1}) \frac{e^{-\theta_1 b}}{(1-e^{-\theta_1})\mu_1} \exp( -2\sum_{k=1}^{\infty} \frac{1}{k} \E_{\theta_1} e^{-\theta_1 S_k^+ } ).
}
\end{theorem}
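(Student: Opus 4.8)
The plan is to reduce the tail probability to the probability that a sum of locally dependent indicators is positive, apply the local (Chen--Stein) Poisson approximation bound to that sum, and evaluate the resulting intensity by an exponential change of measure combined with classical fluctuation theory --- the same three ingredients used for Theorem \ref{t1}, now with the new feature that the window $j-i$ is free. Put $M_j=\max_{0\le i\le j}(S_j-S_i)$; the Lindley recursion $M_j=(M_{j-1}+X_j)^+$, $M_0=0$, identifies the event in \eqref{2.1} with $\{\max_{0\le j\le n}M_j\ge b\}$, i.e.\ the reflected walk reaches $b$ by time $n$. Since $\mu_0<0$ this happens only during a long upward excursion of the path away from its running minimum; after the change of measure $d\P_0/d\P_{\theta_1}=e^{-\theta_1 S_k}$ on $\mathcal F_k$ (legitimate because $\Psi(\theta_1)=0$) the $\P_{\theta_1}$-walk has positive drift $\mu_1$ and such an excursion is typical, taking time $b/\mu_1+O(b^{1/2})$.

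I would encode the crossings as follows: for $i$ in the scanning range let $A_i$ be the event that $i$ is a ``fresh start'', $S_i<S_\ell$ for all $\ell$ in a block of length $K:=\lceil b/\mu_1+b^{1/2}h(b)\rceil$ preceding $i$, that the first passage $\nu(i)=\inf\{k\ge0:S_{i+k}-S_i\ge b\}$ satisfies $i+\nu(i)\le n$, and that $\nu(i)\le K$; set $W=\sum_i\mathbbm{1}_{A_i}$. Taking $i$ to be the last minimiser of $S$ over $[0,j]$ at a crossing time $j$ shows that, but for the two truncations, $\{W\ge1\}=\{\max_{0\le j\le n}M_j\ge b\}$; the block truncation is harmless, while a successful excursion of length $>K$ costs a Gaussian factor (the fluctuation of $\nu$ is of order $b^{1/2}$), so $\{W\ge1\}$ and the event of \eqref{2.1} differ by probability $\lesssim\lambda\,(1+\frac{b/h^2(b)}{n-b/\mu_1})e^{-ch^2(b)}$, with a boundary correction near $i\approx n$ accounting for the second factor. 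This is where $n-b/\mu_1>b^{1/2}h(b)$ enters: it guarantees the scanning window is long enough to contain successful starts and that the fuzzy right endpoint $n-b/\mu_1+O(b^{1/2}h(b))$ is a relative $O(\frac{b^{1/2}h(b)}{n-b/\mu_1})$ perturbation of $n-b/\mu_1$.

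Each $A_i$ depends only on $O(K)$ consecutive $X$'s, so with dependency neighbourhoods $N_i$ of radius $O(K)$ the local Chen--Stein bound gives $\dtv(W,\Po(\lambda))\le b_1+b_2$ with $b_1=\sum_ip_i^2$, $b_2=\sum_i\sum_{j\in N_i\setminus\{i\}}(p_ip_j+\E[\mathbbm{1}_{A_i}\mathbbm{1}_{A_j}])$ and no long-range term. The change of measure gives $p_i=\E_{\theta_1}[\mathbbm{1}_{A_i}e^{-\theta_1 S_{i+\nu(i)}}]=e^{-\theta_1 b}\,\E_{\theta_1}[\mathbbm{1}_{A_i}e^{-\theta_1(S_{i+\nu(i)}-S_i-b)}]\asymp e^{-\theta_1 b}$, because under $\P_{\theta_1}$ the fresh start has probability of order one and the overshoot $S_{i+\nu(i)}-S_i-b$ has, by the renewal theorem for the ascending ladder process of the $\P_{\theta_1}$-walk, a proper limiting law (via $\int|\varphi_{\theta_1}|<\infty$ and Stone's local limit theorem in Case 1, the lattice renewal theorem in Case 2), whose Laplace transform at $\theta_1$ produces the prefactor $1/(\theta_1\mu_1)$, respectively $1/[(1-e^{-\theta_1})\mu_1]$. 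Hence $\lambda=\sum_ip_i\asymp(n-b/\mu_1)e^{-\theta_1 b}$, $b_1\lesssim\lambda e^{-\theta_1 b}$, and $b_2\lesssim\lambda(Ke^{-\theta_1 b}+\frac{b^{1/2}h(b)}{n-b/\mu_1})$ --- all of relative order --- and \eqref{t2-1} follows from $|(1-e^{-\lambda})-\P(W\ge1)|\le\dtv(W,\Po(\lambda))$ together with the event comparison.

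It remains to read off the exponential factor in $\lambda$. The expected number of qualifying fresh starts per unit time is governed by the descending ladder structure of the original negative-drift walk, the overshoot at level $b$ by its ascending ladder structure; expressing both by Spitzer's identity, exactly as the single copy was obtained in Theorem \ref{t1} and Remark \ref{r3}, puts them in the common form $\sum_{k\ge1}\frac1k\E_{\theta_1}e^{-\theta_1 S_k^+}$, and since now \emph{two} ladder structures contribute --- the overshoot (the lone copy in the fixed-window case) and, new here, the regeneration at the running minimum --- the coefficient is $2$; note $\E_{\theta_1}[e^{-\theta_1 S_k^+}]=\P_0(S_k>0)+\P_{\theta_1}(S_k\le0)$, using $\E_{\theta_1}e^{-\theta_1 S_k}=1$, so the series converges. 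The main obstacle is not these identifications but the \emph{uniformity} --- obtaining \emph{relative} rather than absolute error --- which requires every contribution (the event-comparison deficit, $b_1$, $b_2$, the length truncation, the tail of the Spitzer series, and the boundary perturbation of the window) to be bounded by $\lambda$ times a genuinely small factor, so that the estimate is informative both when $\lambda\to0$ and when $\lambda$ stays bounded away from $0$; this is what forces the auxiliary level $h(b)$ to be propagated simultaneously through all of the estimates.
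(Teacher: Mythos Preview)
Your overall strategy --- declump via local-minimum indicators, apply Chen--Stein, then identify the intensity by exponential tilting and ladder-height renewal theory --- is the paper's. But the indicator $A_i$ as you define it has a gap that breaks both the $b_2$ bound and the intensity calculation. You require only that the walk from $i$ reach level $b$ (i.e.\ $\nu(i)\le K$), not that it do so \emph{without first dipping below $S_i$}. Consequently, if the walk from $i$ first drops to a new strict minimum at some $j\in(i,i+K)$ with $S_j<S_i$ and only then climbs past $S_i+b$, both $A_i$ and $A_j$ hold --- a single crossing counted at least twice. Conditionally on $A_i$ (so effectively under $\P_{\theta_1}$) such a dip occurs with probability $\P_{\theta_1}(\tau_-<\tau_b)\to\P_{\theta_1}(\tau_-<\infty)\in(0,1)$, so there are $j\in N_i$ with $\P(A_i\cap A_j)\asymp p_i$; hence $b_2$ is of order $\lambda$, not $o(\lambda)$, and Chen--Stein gives no relative-error control. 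The same omission inflates $p_i$: you obtain $\P(\tau_0=\infty)\,e^{-\theta_1 b}\,\E_{\theta_1}e^{-\theta_1(S_{\tau_b}-b)}$, which is the target per-index intensity divided by the missing factor $\P_{\theta_1}(\tau_-=\infty)$. Your heuristic ``two ladder structures $\Rightarrow$ coefficient $2$'' does not close this: only the \emph{product} $\P(\tau_0=\infty)\,\P_{\theta_1}(\tau_-=\infty)$ equals $\exp\bigl(-\sum_k k^{-1}\E_{\theta_1}e^{-\theta_1 S_k^+}\bigr)$, and the second factor is absent from your $p_i$.

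The paper's fix is to replace $\nu(i)$ by the two-sided exit time $T_b=\inf\{k\ge1:S_{i+k}-S_i\notin[0,b)\}$ together with the requirement that the exit be at the top, $S_{i+T_b}-S_i\ge b$. This forces the excursion from $i$ to stay nonnegative until the crossing, so two nearby indicators can both equal $1$ only through two separate crossings, each costing a factor $e^{-\theta_1 b}$; this restores $b_2=o(\lambda)$. The intensity then factors as $\P(\tau_0=\infty)\,\P(S_{T_b}\ge b)$, and under the tilt $\P(S_{T_b}\ge b)\approx e^{-\theta_1 b}\,\P_{\theta_1}(\tau_-=\infty)\,\E_{\theta_1}e^{-\theta_1(S_{\tau_b}-b)}$, which supplies precisely the missing factor and yields the coefficient $\exp(-2\sum)$. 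A secondary point: the paper uses a short look-back $m\asymp h^2(b)$ rather than your full $K\asymp b$, keeping the left-boundary correction (indices whose look-back protrudes past time $0$) of relative size $m/(n-b/\mu_1)$; with look-back $K$ this would become $b/(n-b/\mu_1)$, which is not dominated by the stated $b^{1/2}h(b)/(n-b/\mu_1)$ when $h(b)=o(b^{1/2})$.
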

\begin{remark}
We refer to Remark \ref{r3} for the numerical calculation of $\lambda$.
By choosing $h(b)=b^{1/2}$, we get
\be{
|p_{n,b}-(1-e^{-\lambda})|\leq C\lambda \{e^{-cb} + \frac{b}{n}\}
}
from \eq{t2-1}. By choosing $h(b)=C(\log b)^{1/2}$ with large enough $C$, we can see that
the relative error in the Poisson approximation goes to zero under the conditions
\be{
b\rightarrow \infty,\quad (b \log b)^{1/2}  \ll n-b/\mu_1 =O(e^{\theta_1 b}),
}
where $n-b/\mu_1 =O(e^{\theta_1 b})$ ensures that $\lambda$ is bounded.
For the smaller range (in which case $\lambda \rightarrow 0$)
\be{
b\rightarrow \infty, \quad \delta b\leq n-b/\mu_1=o(e^{\frac{1}{2} \theta_1 b})
}
for some $\delta>0$, Theorem 2 of \cite{Si88} obtained more accurate estimates by a technique different from ours.
\end{remark}

As in the case of Theorem 2.1, in some simple cases there is also the possibility here to
evaluate $\lambda$ by direct argument and hence avoid the
need for the numerical calculations of the general theory.  Suppose
the $X_j$ are integer valued with either the maximum of the support
equal to 1 or the minimum of the support equal to $-1$.
Two interesting examples mentioned explicitly in \cite{KaDeKa90}
involve these possibilities.  For example, assume that $X_i$ equals
$k \geq 0$ with probability $p_k$ and the negative value $-k$ with probability
$q_k$.  Let $G(z) = \sum_0^\infty p_k z^k + \sum_1^\infty q_k z^{-k}$,
and let $z_0$ denote the unique root $ > 1$ of $G(z) = 1$.
For the case $p_k = 0$ for $k > 1$, using the notation
$Q(z) = \sum_k q_k z^k$, one can
show for large values of $n$ and $b$
that $\lambda \sim n z_0^{-b} \{[Q(1) - Q(z_0^{-1})]- (1-z_0^{-1}) z_0^{-1} Q'(z_0^{-1})\}$.
For the case $q_k = 0$ for $k > 1$,
$\lambda \sim n z_0^{-b}(1 - z_0^{-1}))|G'(1)|^2/G'(z_0).$
In particular if $q_1 = q$ and $p_1 = p$, where $p+q = 1$, both
these results specialize to
$\lambda \sim n (p/q)^b (q-p)^2/q.$
These results differ from those given in \cite{KaDeKa90} and hence produce
numerical results slightly different from those cited there.

\section{Proofs}

Before proving our main theorems, we first introduce our main tool: Stein's method.
Stein's method was first introduced by \cite{St72} and further developed in \cite{St86} for normal approximation. \cite{Ch75} developed Stein's method for Poisson approximation, which has been widely applied especially in computational biology after the work by \cite{ArGoGo90}. We refer to \cite{BaCh05} for an introduction to Stein's method.

The following theorem provides a useful upper bound on the total variation distance between the distribution of a sum of locally dependent Bernoulli random variables and a Poisson distribution.
The total variation distance between two distributions is defined as
\be{
d_{TV}(\mathcal{L}(X), \mathcal{L}(Y))=\sup_{A\subset \mathbb{R}}|\P(X\in A)-\P(Y\in A)|.
}

\begin{theorem}[\cite{ArGoGo90}]\label{t3} 
Let $W=\sum_{\alpha\in A} Y_\alpha$ be a sum of Bernoulli random variables where $A$ is the index set and $\P(Y_\alpha=1)=1-\P(Y_\alpha=0)=p_\alpha$. Let $\lambda=\sum_{\alpha\in A} p_\alpha$, and let $Poi(\lambda)$ denote the Poisson distribution with mean $\lambda$. Then,
\ben{\label{t3-1}
d_{TV}(\mathcal{L}(W), Poi(\lambda))\leq (1\wedge \frac{1}{\lambda}) (b_1+b_2+b_3)
}
where for each $\alpha$ and $B_\alpha$ such that $\alpha\in B_\alpha\subset A$,
\besn{\label{t3-2}
&b_1:=\sum_{\alpha\in A}\sum_{\beta\in B_\alpha} p_\alpha p_\beta,\\
&b_2:=\sum_{\alpha\in A} \sum_{\alpha\ne \beta \in B_\alpha} \E (Y_\alpha Y_\beta),\\
&b_3:=\sum_{\alpha\in A} \E \Big| \E \big(Y_\alpha-p_\alpha \big| \sigma(Y_\beta: \beta\notin B_\alpha)  \big)  \Big|.
}
\end{theorem}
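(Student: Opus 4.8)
The plan is to prove Theorem~\ref{t3} by Stein's method for Poisson approximation together with the local dependence decomposition carried by the neighbourhoods $\{B_\alpha\}$; this is precisely the Chen--Stein argument, and it is the template used throughout the rest of the paper.

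First I would set up the Stein apparatus. Define $(\mathcal{A}g)(j):=\lambda g(j+1)-jg(j)$ for $g\colon\IZ_{\ge0}\to\IR$, and for $A\subseteq\IZ_{\ge0}$ let $g_A$ be the solution, vanishing at $0$, of the Stein equation $(\mathcal{A}g)(j)=\mathbbm{1}_A(j)-Poi(\lambda)(A)$; it is given explicitly by $g_A(j)=\frac{(j-1)!}{\lambda^j}e^\lambda\bigl[Poi(\lambda)(A\cap\{0,\dots,j-1\})-Poi(\lambda)(A)\,Poi(\lambda)(\{0,\dots,j-1\})\bigr]$ for $j\ge1$. I would invoke the classical ``magic factor'' estimates on this solution, uniformly over $A$: $\|\Delta g_A\|_\infty:=\sup_j|g_A(j+1)-g_A(j)|\le\frac{1-e^{-\lambda}}{\lambda}\le 1\wedge\lambda^{-1}$ and $\|g_A\|_\infty\le 1\wedge\lambda^{-1}$. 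Since $d_{TV}(\mathcal{L}(W),Poi(\lambda))=\sup_A|\E(\mathcal{A}g_A)(W)|$, it then suffices to bound $|\E[\lambda g(W+1)-Wg(W)]|$ for a single $g$ obeying these bounds.

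Next comes the decomposition. Writing $\lambda=\sum_\alpha p_\alpha$, and for each $\alpha$ setting $V_\alpha=\sum_{\beta\in B_\alpha\setminus\{\alpha\}}Y_\beta$ and $U_\alpha=\sum_{\beta\notin B_\alpha}Y_\beta$ so that $W=Y_\alpha+V_\alpha+U_\alpha$ and, since $Y_\alpha\in\{0,1\}$, $Y_\alpha g(W)=Y_\alpha g(1+V_\alpha+U_\alpha)$, I would split
\begin{align*}
\E[\lambda g(W+1)-Wg(W)]
&=\sum_\alpha p_\alpha\,\E[g(1+Y_\alpha+V_\alpha+U_\alpha)-g(1+U_\alpha)]\\
&\quad+\sum_\alpha\E[(p_\alpha-Y_\alpha)\,g(1+U_\alpha)]\\
&\quad+\sum_\alpha\E[Y_\alpha(g(1+U_\alpha)-g(1+V_\alpha+U_\alpha))].
\end{align*}
Using only $|g(j')-g(j)|\le|j'-j|\,\|\Delta g\|_\infty$, the first sum is at most $\|\Delta g\|_\infty\sum_\alpha p_\alpha\,\E(Y_\alpha+V_\alpha)=\|\Delta g\|_\infty\sum_\alpha p_\alpha\sum_{\beta\in B_\alpha}p_\beta=\|\Delta g\|_\infty b_1$ in absolute value, and the third is at most $\|\Delta g\|_\infty\sum_\alpha\E(Y_\alpha V_\alpha)=\|\Delta g\|_\infty b_2$. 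For the middle sum, $g(1+U_\alpha)$ is measurable with respect to $\sigma(Y_\beta:\beta\notin B_\alpha)$, so it equals $\sum_\alpha\E[g(1+U_\alpha)\,\E(p_\alpha-Y_\alpha\mid\sigma(Y_\beta:\beta\notin B_\alpha))]$, which is bounded in absolute value by $\|g\|_\infty\sum_\alpha\E|\E(Y_\alpha-p_\alpha\mid\sigma(Y_\beta:\beta\notin B_\alpha))|=\|g\|_\infty b_3$. Collecting the three estimates and the norm bounds gives $|\E(\mathcal{A}g_A)(W)|\le(1\wedge\lambda^{-1})(b_1+b_2+b_3)$, and taking the supremum over $A$ yields \eqref{t3-1}.

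The decomposition is elementary bookkeeping; I expect the genuine content to lie in the magic-factor estimates. Establishing $\|\Delta g_A\|_\infty\le(1-e^{-\lambda})/\lambda$ uniformly over $A$ amounts to differencing the explicit formula, reducing to singletons $A=\{k\}$, and exploiting that $j\mapsto g_{\{k\}}(j)$ changes sign exactly once (it is positive through $j=k$ and negative afterwards), which pins the extremal sets as the ``down-sets'' $\{0,\dots,k\}$ and ``up-sets'' $\{k+1,k+2,\dots\}$; then $1-e^{-\lambda}\le1$ and $1-e^{-\lambda}\le\lambda$ deliver the factor $1\wedge\lambda^{-1}$. The bound on $\|g_A\|_\infty$ used for the $b_3$ term is of the same flavour and is the one delicate point, since it is what produces the relative-error factor $1\wedge\lambda^{-1}$ in front of $b_3$ rather than a cruder constant; alternatively one could recenter $g(1+U_\alpha)$ by an arbitrary constant before conditioning in the middle sum (legitimate because $\E(p_\alpha-Y_\alpha)=0$) and estimate the resulting oscillation, but this still rests on the same solution estimates.
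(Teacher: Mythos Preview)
The paper does not prove Theorem~\ref{t3}; it quotes it from \cite{ArGoGo90} and uses it as a black box, so there is no ``paper's own proof'' to compare against. Your argument is the standard Chen--Stein derivation, and the decomposition together with the bounds on the $b_1$ and $b_2$ pieces via $\|\Delta g_A\|_\infty\le(1-e^{-\lambda})/\lambda\le 1\wedge\lambda^{-1}$ is correct.

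There is, however, a genuine gap in your treatment of the $b_3$ term. The bound $\|g_A\|_\infty\le 1\wedge\lambda^{-1}$ that you invoke is false: the sharp uniform estimate for the Stein solution is $\|g_A\|_\infty\le\min(1,1.4\,\lambda^{-1/2})$ (see, e.g., Lemma~1.1.1 of Barbour--Holst--Janson), and half-line sets $A$ show that the $\lambda^{-1/2}$ rate cannot be improved to $\lambda^{-1}$. Your recentering idea does not rescue this, since subtracting a constant only replaces $\|g\|_\infty$ by half the oscillation of $g$, which is of the same order. Consequently the Chen--Stein route yields
\[
d_{TV}(\mathcal{L}(W),Poi(\lambda))\le (1\wedge\lambda^{-1})(b_1+b_2)+\min(1,1.4\,\lambda^{-1/2})\,b_3,
\]
which is in fact what \cite{ArGoGo90} prove; the statement recorded here as \eqref{t3-1} is a mild over-simplification of their result. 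This discrepancy is harmless for the present paper, because every application (Lemma~\ref{lemma3}, the proof of Corollary~\ref{c1}, and the proof of Theorem~\ref{t2}) chooses $B_\alpha$ so that $Y_\alpha$ is independent of $\{Y_\beta:\beta\notin B_\alpha\}$, whence $b_3=0$. But you should correct the asserted bound on $\|g_A\|_\infty$ and, if you want exactly \eqref{t3-1}, note explicitly that it is only claimed (and only needed) in the local-dependence regime $b_3=0$.
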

\begin{remark}
If $B_\alpha$ is chosen such that $X_\alpha$ is independent of $\{X_\beta: \beta\notin B_\alpha\}$, then $b_3$ in \eq{t3-1} equals $0$. Roughly speaking, in order for $b_1$ and $b_2$ to be small, the size of $B_\alpha$ has to be small and $\E(Y_\beta|Y_\alpha=1)=o(1)$ for $\alpha\ne \beta \in B_\alpha$.
\end{remark}

\subsection{Proof of Theorem \ref{t1}}
In this subsection, let $C$ and $c$ denote positive constants depending only on the exponential family \eq{1.2}, 
$\mu_0$, and $a$. They may represent different values in different expressions.
The lemmas used in the proof of Theorem \ref{t1} will be stated and proved after the proof of the Theorem.
\begin{proof}[Proof of Theorem \ref{t1}]
By the union bound and Lemma \ref{lemma1}, we have
\besn{\label{star5}
\P(M_{n;t}\geq at)&\leq (n-t+1)\P(X_1+\dots + X_t\geq at)\\
&\sim (n-t+1) e^{-(a\theta_a -\Psi(\theta_a))t} \big/ t^{1/2}
}
where $x\sim y$ means that $x/y$ is bounded away from zero and infinity.
On the other hand, by the definition of $\lambda$ in \eq{t1-2} and \eq{t1-3}, we have
\ben{\label{star101}
\lambda\sim (n-t+1) e^{-(a\theta_a -\Psi(\theta_a))t} \big/ t^{1/2}.
}
From \eq{star5} and \eq{star101}, if $t$ or $n-t$ is bounded, then the bound \eq{t1-1} holds true by choosing $C$ in \eq{t1-1} to be large enough.
Therefore, in the sequel, we can assume $t$ and $n-t$ to be larger than any given constant.

Let $\delta$ be a positive number such that 
\ben{\label{1.19}
0<\delta<1\wedge (a-\mu_0)/4 \quad \text{and} \quad \Psi(\theta_a)-(\mu_0+\delta)\theta_a >0.
}
The second inequality above is possible because of the strict convexity of $\Psi$.
Let
\ben{\label{1.20}
m= \lfloor C (\log t \wedge \log(n-t)) \rfloor
}
where the constant $C$ will be chosen later in \eq{801}.
By Lemma \ref{lemma2}, we can find $\theta_{a_1}\in \Theta$ such that $\theta_a<\theta_{a_1}$,
$\theta_{a_1}\leq \theta_{a'}$ for Case 1
and for $t$ and $n-t$ larger than some unspecified constant, the following bound holds uniformly in $a_2\in [a, a_1]$:
\ben{\label{719}
d_{TV}\Big(\mathcal{L}\big( \widetilde{X}_i^{a_2}: 1\leq i\leq m \big), \mathcal{L}\big( X_i^{a_2}: 1\leq i\leq m \big)    \Big)\leq C\frac{m}{t},
}
where $\{\widetilde{X}_1^{a_2},\dots, \widetilde{X}_m^{a_2}\}$ is distributed as the conditional distribution of $\{X_1,\dots, X_m\}$ given $X_1+\dots+X_t=a_2 t$, and $\{X_1^{a_2},\dots, X_m^{a_2}\}$ are independent, identically distributed as $F_{\theta_{a_2}}$.
In the following, we fix such an $a_1$ and assume $t$ and $n-t$ to be large enough so that the bound \eq{719} holds and
\ben{\label{1.21}
m<(a_1-a)t/\delta.
}
We embed the sequence $\{X_1,\dots, X_n\}$ into an infinite i.i.d. sequence 
\be{
\{\dots,X_{-1},X_0,X_1,\dots\}.
}
For each integer $\alpha$, let
\ben{\label{709}
T_\alpha=X_\alpha +\dots + X_{\alpha+t-1},\quad \widetilde{Y}_\alpha=I(T_\alpha \geq at ).
}
To avoid the clumping of $1$'s in the sequence $(\widetilde{Y}_\alpha)$ which makes a Poisson approximation invalid, we define
\ben{\label{710}
Y_\alpha=I(T_\alpha\geq at, T_{\alpha-1}<at,\dots,T_{\alpha-m}<at).
}
Let
\ben{\label{704}
W=\sum_{\alpha=1}^{n-t+1} Y_\alpha, \quad \lambda_1=\E W=(n-t+1) \E Y_1.
}
In the following, we first bound $| \P(M_{n;t}\geq at)-\P(W\geq 1) |$, then bound the total variation distance between the distribution of $W$ and $Poi(\lambda_1)$, finally we bound $|\lambda_1-\lambda|$.

First, since $\{M_{n;t}\geq at\}\backslash \{W\geq 1\}\subset \cup_{\alpha=1}^m \{T_\alpha\geq at\}$,
we have
\ben{\label{1.1}
0\leq \P(M_{n;t}\geq at)-\P(W\geq 1) \leq m \P(T_1 \geq  at).
}
Next, by applying Theorem \ref{t3}, we prove in Lemma \ref{lemma3} that
\besn{\label{1.11}
&\big| \P(W\geq 1)-(1-e^{-\lambda_1})   \big| \\
&\leq C(1\wedge \frac{1}{\lambda_1}) (n-t+1) \P(T_1\geq at)  [t \P(T_1\geq at)+e^{-cm}],
}
where the constant $c$ does not depend on the choice of the constant $C$ in \eq{1.20}, as can be seen from the proof of Lemma \ref{lemma3}.
Since $\lambda_1$ does not have an explicit expression, our final goal is to show that $\lambda_1$ is close to $\lambda$, which can be calculated explicitly as discussed in Remark \ref{r3}. For this purpose, we first introduce an intermediate quantity $\lambda_2$ defined as
\ben{\label{706}
\lambda_2=(n-t+1)\int_{at}^{\infty} \P(D_i>s-at, i\geq 1)d\P(T_1\leq s).
}
Lemma \ref{lemma4} shows that 
\ben{\label{star3}
|\lambda_1 - \lambda_2|\leq C (n-t) \big[ \frac{m^2}{t} + e^{-c m} \big] \P(T_1\geq at),
}
and Lemma \ref{lemma5} shows that
\ben{\label{714}
\lambda_2=\lambda \big(1+O(\frac{(\log t)^2}{t}) \big).
}
Again, from the proof of Lemma \ref{lemma4}, the constant $c$ in \eq{star3} does not depend on the choice of the constant $C$ in \eq{1.20}. 
Let the constant $C$ in \eq{1.20} be chosen such that
\ben{\label{801}
e^{-cm}=O(\frac{1}{t} \vee \frac{1}{n-t})
}
for the constants $c$ in \eq{1.11} and \eq{star3}.
By Lemma \ref{lemma1} and \eq{star101}, we have
\ben{\label{802}
\lambda\sim (n-t)\P(T_1\geq at).
}
This, together with \eq{star3}, \eq{714} and \eq{1.20}, implies
\ben{\label{803}
|\lambda_1-\lambda|\leq C\lambda \big[ \frac{(\log t)^2}{t} +\frac{1}{n-t} \big] \quad \text{and}\quad  \lambda_1\sim \lambda.
}
By \eq{1.1} and \eq{1.11}, we have
\bes{
&|\P(M_{n;t}\geq at)-(1-e^{-\lambda_1})|\\
\leq& C(1\wedge \frac{1}{\lambda_1}) (n-t) \P(T_1\geq at)
\Big[  t\P(T_1\geq at) +e^{-cm}+\frac{m}{n-t}   \Big],
}
where the constant $c$ is the same as that in \eq{1.11}.
By \eq{802}, \eq{803}, \eq{star5}, \eq{801} and \eq{1.20}, this is further bounded by
\ben{\label{804}
|\P(M_{n;t}\geq at)-(1-e^{-\lambda_1})|\leq C(\lambda \wedge 1) \Big[ \frac{1}{t}+\frac{\log t\wedge \log(n-t)}{n-t}  \Big].
}
The bound \eq{t1-1} is proved by using \eq{803} and \eq{804} for the cases $\lambda\leq 1$ and $\lambda> 1$ separately and using $|e^{-\lambda}-e^{-\lambda_1}|\leq |\lambda-\lambda_1| e^{-(\lambda\wedge \lambda_1)}$.
\end{proof}

The following lemmas have been used in the above proof.

\begin{lemma}[Theorem 1 and Theorem 6 of \cite{Pe65}]\label{lemma1}
Under the setting of Theorem \ref{t1}, we have
\be{
\P(X_1+\dots + X_t\geq at)\sim e^{-(a\theta_a -\Psi(\theta_a))t} \big/ t^{1/2}.
}
\end{lemma}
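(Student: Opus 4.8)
The plan is to run the classical conjugate-measure (exponential tilting) argument that underlies Petrov's large-deviation local limit theorems, indicating where the two smoothness hypotheses \eqref{702} and \eqref{701} enter. Let $\theta_a\in\Theta$ be the solution of $\Psi'(\theta_a)=a$, which is unique by strict convexity of $\Psi$ and satisfies $\theta_a>0$ since $a>\mu_0=\Psi'(0)$; recall $\sigma_a^2=\Psi''(\theta_a)>0$. Applying the identity $dF(x)=e^{-\theta_a x+\Psi(\theta_a)}\,dF_{\theta_a}(x)$ coordinatewise to the i.i.d.\ vector $(X_1,\dots,X_t)$ and writing $S_t=X_1+\dots+X_t$ gives
\be{
\P(S_t\geq at)=e^{-(a\theta_a-\Psi(\theta_a))t}\,\E_{\theta_a}\bigl[e^{-\theta_a(S_t-at)}I(S_t\geq at)\bigr],
}
where under $\P_{\theta_a}$ the $X_i$ are i.i.d.\ with mean $\Psi'(\theta_a)=a$ and variance $\sigma_a^2$. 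Hence it suffices to show that $t^{1/2}\,\E_{\theta_a}[e^{-\theta_a(S_t-at)}I(S_t\geq at)]$ stays bounded away from $0$ and $\infty$ as $t$ ranges over the positive integers. Positivity for each fixed $t$ is automatic: since $\E_{\theta_a}S_t=at$ and $S_t$ is non-degenerate, $\P_{\theta_a}(S_t\geq at)>0$, so only the behaviour for large $t$ requires the local limit theorems, the finitely many small values of $t$ being absorbed into the constants.

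In Case 1, hypothesis \eqref{702} yields $\int_{\IR}|\varphi_{\theta_a}|^{\nu}<\infty$, so for every $t\geq\nu$ the law of $S_t$ under $\P_{\theta_a}$ has a bounded density $p_t$ obtained by Fourier inversion of $\varphi_{\theta_a}^{\,t}$; Gnedenko's local limit theorem (Theorem 1 of \cite{Pe65}) gives $\sqrt{t}\,p_t(at+u)\to(\sigma_a\sqrt{2\pi})^{-1}$ uniformly for $u$ in compact sets, together with a uniform bound $p_t(x)\leq Ct^{-1/2}$ valid for all $x$ and all $t\geq\nu$. Since
\be{
\E_{\theta_a}\bigl[e^{-\theta_a(S_t-at)}I(S_t\geq at)\bigr]=\int_0^\infty e^{-\theta_a u}\,p_t(at+u)\,du,
}
the uniform bound gives the upper estimate $\leq C/(\theta_a\sqrt{t})$, while restricting the integral to $u\in[0,1]$ and invoking the local limit theorem on that compact set gives the matching lower bound $\geq c/\sqrt{t}$. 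In Case 2, $S_t$ is $\IZ$-valued, the span-$1$ assumption \eqref{701} (which is preserved under tilting, since it is a property of the support) makes the lattice local limit theorem (Theorem 6 of \cite{Pe65}) applicable, giving $\sqrt{t}\,\P_{\theta_a}(S_t=s)\to(\sigma_a\sqrt{2\pi})^{-1}$ uniformly over integers $s$ with $|s-at|=O(\sqrt{t})$, and $\P_{\theta_a}(S_t=s)\leq Ct^{-1/2}$ uniformly. Writing
\be{
\E_{\theta_a}\bigl[e^{-\theta_a(S_t-at)}I(S_t\geq at)\bigr]=\sum_{s\geq \lceil at\rceil}e^{-\theta_a(s-at)}\,\P_{\theta_a}(S_t=s),
}
the geometric weights $e^{-\theta_a(s-at)}$ (with $\theta_a>0$) localize the sum to $s-at=O(1)$, so the same sandwich applies: the uniform upper bound times the convergent series $\sum_{k\geq0}e^{-\theta_a k}$ from above, and the single term $s=\lceil at\rceil$ times the local limit value from below. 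This shows the sum is bounded above and below by constant multiples of $t^{-1/2}$ uniformly in $t$, the two constants differing only by the bounded factor $e^{-\theta_a(\lceil at\rceil-at)}\in(e^{-\theta_a},1]$, which is harmless for the asserted $\sim$.

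The only genuine work is in quoting the two local limit theorems uniformly in the relevant parameter, and \eqref{702}/\eqref{701} are precisely the conditions that make those classical statements applicable. I should emphasize that here only the order $t^{-1/2}$ is needed, not the sharp constant $(\sigma_a\sqrt{2\pi}\,\theta_a)^{-1}$ (Case 1), respectively $(\sigma_a\sqrt{2\pi})^{-1}e^{-\theta_a(\lceil at\rceil-at)}/(1-e^{-\theta_a})$ (Case 2); those finer asymptotics fall out of the same computation by dominated convergence and are what eventually enter $\lambda$ through the analysis of $\lambda_2$ in Lemma \ref{lemma5}.
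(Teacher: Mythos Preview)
Your argument is correct: the exponential-tilting identity reduces the tail probability to a $\P_{\theta_a}$-expectation localized near the mean, and the local limit theorems (density version under \eqref{702}, lattice version under \eqref{701}) then sandwich that expectation between constant multiples of $t^{-1/2}$. The paper does not supply its own proof of this lemma at all; it simply cites Theorems~1 and~6 of \cite{Pe65} as the source, so you are reconstructing the classical conjugate-measure argument that underlies those results rather than offering an alternative. One small cosmetic point: in Case~1 you invoke a uniform density bound $p_t(x)\leq Ct^{-1/2}$ for all $x$, which is a consequence of the $L^\nu$ integrability of $\varphi_{\theta_a}$ (Fourier inversion gives $\sup_x p_t(x)\leq (2\pi)^{-1}\int|\varphi_{\theta_a}|^{t}$, and for $t\geq\nu$ this is $O(t^{-1/2})$ by the usual stationary-phase/Laplace estimate on $|\varphi_{\theta_a}|^t$ near the origin combined with the $L^\nu$ bound away from it); it may be worth a phrase to that effect, but it is standard and exactly what Petrov proves.
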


\begin{lemma}\label{lemma2}
Under the setting of Theorem \ref{t1}, there exists $\theta_{a_1}\in \Theta$ such that $\theta_a<\theta_{a_1}$,
$\theta_{a_1}\leq \theta_{a'}$ for Case 1 and the bound \eq{719} holds uniformly in $a_2\in [a, a_1]$ and in $m$ and $t$ such that $m$, $t$ and $t/m$ are larger than some unspecified constant.
\end{lemma}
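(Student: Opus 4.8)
The plan is to show that conditioning the i.i.d.\ sequence $\{X_1,\dots,X_t\}$ on the event $\{X_1+\dots+X_t = a_2 t\}$ is, on the first $m$ coordinates, close in total variation to sampling i.i.d.\ from the tilted law $F_{\theta_{a_2}}$, uniformly in $a_2$ ranging over a suitably small interval $[a,a_1]$ to the right of $a$. The mechanism is the classical local central limit theorem / Edgeworth-type expansion for the conditional density. First I would write the conditional law of $(X_1,\dots,X_m)$ given $S_t = a_2 t$ explicitly: under the tilted measure $F_{\theta_{a_2}}$ (which makes the conditioning event typical, since $\Psi'(\theta_{a_2}) = a_2$), the conditional density of $(X_1,\dots,X_m)$ at $(x_1,\dots,x_m)$ is
\be{
\prod_{i=1}^m dF_{\theta_{a_2}}(x_i) \cdot \frac{p_{t-m}\klr{a_2 t - \sum_{i=1}^m x_i}}{p_t(a_2 t)},
}
where $p_k$ denotes the density (Case 1) or mass function (Case 2) of a sum of $k$ i.i.d.\ $F_{\theta_{a_2}}$ variables. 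The Radon--Nikodym derivative with respect to the i.i.d.\ tilted law is thus the ratio $p_{t-m}(a_2 t - \Sigma_m)/p_t(a_2 t)$, where $\Sigma_m = \sum_{i=1}^m X_i^{a_2}$.

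The key step is a two-term local limit expansion. Under the smoothness hypothesis \eq{702} (Case 1) or the span-one lattice condition \eq{701} (Case 2), Petrov-type local limit theorems give, uniformly over $a_2$ in a compact subinterval of the open set where $\theta_{a_2}$ exists,
\be{
p_k(a_2 t + y) = \frac{1}{\sigma_{a_2}\sqrt{2\pi k}}\,\exp\bklr{-\tfrac{(a_2 t + y - a_2 k)^2}{2\sigma_{a_2}^2 k}}\,\bklr{1 + O(k^{-1/2})},
}
with the error uniform in $y$ in the relevant range. Applying this with $k = t$, $y=0$ and with $k = t-m$, $y = -\Sigma_m$ (noting $a_2 t - a_2(t-m) = a_2 m$, so the Gaussian argument is $(\Sigma_m - a_2 m)/(\sigma_{a_2}\sqrt{t-m})$, which is $O_p(\sqrt{m/t})$), the ratio becomes $1 + O(m/t) + O(t^{-1/2}) + O(|\Sigma_m - a_2 m|^2/t)$ on the bulk of the tilted probability, and one controls the tail contribution where $|\Sigma_m - a_2 m|$ is large by a standard large-deviation bound for the centered tilted walk. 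Since $d_{TV} = \tfrac12 \E|\,\mathrm{d\,conditional}/\mathrm{d\,iid} - 1\,|$ with the expectation under the i.i.d.\ tilted law, and $\E_{\theta_{a_2}}|\Sigma_m - a_2 m|^2 = \sigma_{a_2}^2 m$, this yields the bound $C m/t$ claimed in \eq{719}, once $t$, $m$ and $t/m$ exceed absolute constants (the last to keep $t - m$ comparable to $t$). Finally, the choice of $\theta_{a_1}$: pick $a_1 \in (a, a']$ in Case 1 (so that $[\,\theta_a,\theta_{a_1}] \subset [\,\theta_a,\theta_{a'}]$ and \eq{702} applies on the whole range), and in Case 2 pick any $a_1 > a$ with $\theta_{a_1} \in \Theta$; uniformity over $a_2 \in [a,a_1]$ comes for free because $\sigma_{a_2}$, the third absolute moment of $F_{\theta_{a_2}}$, and the characteristic-function integral in \eq{702} are all continuous, hence bounded, on this compact interval.

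The main obstacle is making the local limit expansion genuinely uniform in the shift parameter $a_2$ and in the displacement $y$ simultaneously, and handling the large-deviation tail of $\Sigma_m$ so that it contributes only $O(m/t)$ rather than something larger; in Case 2 one must also track the integer-valued correction ($\lceil a_2 t\rceil$ versus $a_2 t$) consistently, but since $a_1$ is fixed and $a_2 t$ need not be an integer, one simply restricts attention to $a_2$ with $a_2 t \in \IZ$ in the conditioning event, exactly as in the statement of Theorem \ref{t1}. I would organize the Case 1 and Case 2 arguments in parallel, invoking the appropriate version of Petrov's local limit theorem (the same references as Lemma \ref{lemma1}) in each.
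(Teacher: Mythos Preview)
Your approach is the same mechanism the paper relies on---it simply cites Theorem 1.6 of \cite{DiFr88} (after verifying their Conditions 1.2--1.4 hold uniformly on $[\theta_a,\theta_{a_1}]$, which is the content of the continuity argument leading to their equation $\sup_{\theta_a\le\theta\le\theta_{a_1}}\sup_{|x|>\epsilon}|\varphi_\theta(x)|<1$), whereas you are rederiving that theorem from scratch via the density-ratio representation and a local limit expansion. So the route is not different; yours just inlines the cited result.

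There is, however, a real gap in your error accounting. You write the ratio as $1+O(m/t)+O(t^{-1/2})+O(|\Sigma_m-a_2m|^2/t)$ and then claim this yields $d_{TV}\le Cm/t$. Taking expectations, the last term contributes $O(m/t)$, but the $O(t^{-1/2})$ term does not: it gives $d_{TV}\le C(m/t+t^{-1/2})$, which is strictly weaker than \eq{719} whenever $m=o(t^{1/2})$. In the paper's application $m\asymp\log t$ (see \eq{1.20}), so this would degrade the final rate in Theorem~\ref{t1} from $(\log t)^2/t$ to $\log t/\sqrt{t}$. The $O(t^{-1/2})$ arises because a one-term local limit expansion $p_k=\phi_k\cdot(1+O(k^{-1/2}))$ does not let the correction terms in numerator and denominator cancel. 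To get $O(m/t)$ you must carry the first Edgeworth correction $P_1(\cdot)/\sqrt{k}$ explicitly and observe that $P_1((\Sigma_m-a_2m)/\sigma_{a_2}\sqrt{t-m})/\sqrt{t-m}-P_1(0)/\sqrt{t}=O(|\Sigma_m-a_2m|/t)+O(m/t^{3/2})$; this is exactly the careful step in Diaconis--Freedman's proof. Either do that expansion properly, or (as the paper does) invoke their Theorem~1.6 directly and spend the effort instead on checking that their Cram\'er-type Condition~1.3 holds uniformly over $[\theta_a,\theta_{a_1}]$---your one-line ``continuity gives uniformity for free'' is correct in spirit but is precisely the part the paper spells out.
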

\begin{proof}
For Case 1, by \eq{702}, we have $|\varphi_{\theta_{a}}(x)|<1$ for $x\ne 0$ and $|\varphi_{\theta_{a}}(x)| \rightarrow 0$ as $|x| \rightarrow \infty$. Therefore, there exists $M>0$ such that $|\varphi_{\theta_a}(x)|<1/2$ for $|x|>M$. By the dominated convergence theorem,
\be{
|\varphi_{\theta_a+h}(x)-\varphi_{\theta_a}(x)| \rightarrow 0\ \text{as} \ h\rightarrow 0^+ \ \text{uniformly in $x$}.
}
This, together with the continuity of the function $\varphi_{\theta}(\cdot)$, implies that there exists $a_1\leq a'$ such that
\ben{\label{703}
\sup_{\theta_a\leq \theta\leq \theta_{a_1}}\sup_{|x|>\epsilon}|\varphi_{\theta}(x)|<1\ \text{for all}\ \epsilon>0.
}
We now show that with such choice of $a_1$, \eq{719} is satisfied.
We follow the proof of Theorem 1.6 of \cite{DiFr88}. Since only the range of parameters $[a,a_1]$ enters into considerations, we do not need their Condition 1.1. By \eq{702} and \eq{703}, their Conditions 1.2--1.4 are satisfied for the range of parameters $[a,a_1]$.
Following their proof of Theorem 1.6, \eq{719} holds uniformly in $a_2\in [a, a_1]$ and in $m$ and $t$ such that $m$, $t$ and $t/m$ are larger than some unspecified constant.
The bound \eq{719} for Case 2 can be proved by rewriting the proof (e.g., changing density functions to probability mass functions) for Case 1. As mentioned in \cite{DiFr88}, the proof for the discrete case is a little easier. Therefore, we omit the details here.
\end{proof}

\begin{lemma}\label{lemma3}
Under the setting of Theorem \ref{t1}, we have
\bes{
&\big| \P(W\geq 1)-(1-e^{-\lambda_1})   \big| \\
&\leq C(1\wedge \frac{1}{\lambda_1}) (n-t+1) \P(T_1\geq at)  [t \P(T_1\geq at)+e^{-cm}].
}
where $W$ and $\lambda_1$ are defined in \eq{704}, $T_1$ is defined in \eq{709}, and $m$ is defined in \eq{1.20}.
\end{lemma}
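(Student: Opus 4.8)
The plan is to apply Theorem \ref{t3} directly to $W=\sum_{\alpha=1}^{n-t+1}Y_\alpha$ with a suitable choice of the neighborhoods $B_\alpha$, and then convert the resulting total variation bound into the stated bound on $|\P(W\geq 1)-(1-e^{-\lambda_1})|$. First I would note that $|\P(W\geq 1)-(1-e^{-\lambda_1})|=|\P(W=0)-e^{-\lambda_1}|\leq d_{TV}(\mathcal L(W),\Po(\lambda_1))$, so it suffices to control the right-hand side of \eq{t3-1}. The natural choice is $B_\alpha=\{\beta:|\beta-\alpha|\leq t+m\}$, since $Y_\alpha$ depends only on $X_{\alpha-m},\dots,X_{\alpha+t-1}$, and hence $Y_\alpha$ is independent of $\{Y_\beta:\beta\notin B_\alpha\}$; consequently $b_3=0$ and only $b_1$ and $b_2$ need to be estimated.

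For $b_1$: each $p_\alpha=\E Y_\alpha\leq \P(T_\alpha\geq at)=\P(T_1\geq at)$, and $|B_\alpha|\leq 2(t+m)+1\leq Ct$ (using $m=O(\log t)\leq Ct$), so $b_1=\sum_\alpha\sum_{\beta\in B_\alpha}p_\alpha p_\beta\leq (n-t+1)\cdot Ct\cdot\P(T_1\geq at)^2 = C(n-t+1)\P(T_1\geq at)\,[t\P(T_1\geq at)]$, which is the first term in the bracket. For $b_2=\sum_\alpha\sum_{\alpha\ne\beta\in B_\alpha}\E(Y_\alpha Y_\beta)$ I would split the inner sum according to the distance $|\alpha-\beta|$. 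When $0<|\alpha-\beta|\leq m$, the events $\{T_\alpha\geq at\}$ and $\{T_\beta\geq at\}$ are incompatible with the non-clumping indicators: if $\beta<\alpha$ and $\beta\geq\alpha-m$ then $Y_\alpha=0$ forces $T_\beta<at$ for all such $\beta$, so in fact $\E(Y_\alpha Y_\beta)=0$ for $0<|\alpha-\beta|\leq m$ — this is precisely the point of introducing the $m$-gap in \eq{710}. When $m<|\alpha-\beta|\leq t+m$, the windows $[\alpha,\alpha+t-1]$ and $[\beta,\beta+t-1]$ still overlap; here I would bound $\E(Y_\alpha Y_\beta)\leq\P(T_\alpha\geq at,\,T_\beta\geq at)$ and show this overlap probability is $\leq e^{-cm}\P(T_1\geq at)$ by a large-deviation/change-of-measure argument: on $\{T_\alpha\geq at,T_\beta\geq at\}$ the union of the two windows has length $t+|\alpha-\beta|>t+m$ and carries a sum $\geq 2at-(\text{overlap sum})$, forcing the non-overlap part of one window to contribute a sum comfortably above its mean over a stretch of length $>m$, which costs a factor $e^{-cm}$ relative to the single-window large deviation. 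Summing over the $\leq 2t$ values of $\beta$ with $m<|\alpha-\beta|\leq t+m$ and over the $n-t+1$ values of $\alpha$ gives $b_2\leq C(n-t+1)\,t\,e^{-cm}\P(T_1\geq at)$, and since $e^{-cm}\leq 1$ this is absorbed into $C(n-t+1)\P(T_1\geq at)[t\P(T_1\geq at)+e^{-cm}]$ after noting $t\P(T_1\geq at)\leq Ct\cdot e^{-ct}\cdot t^{-1/2}\leq C$ by Lemma \ref{lemma1}, or alternatively keeping the $e^{-cm}$ term explicitly as written. Finally, multiplying $b_1+b_2$ by the prefactor $(1\wedge\lambda_1^{-1})$ from \eq{t3-1} yields exactly the claimed inequality, and the constant $c$ depends only on the exponential family, $\mu_0$ and $a$ (not on the constant $C$ in \eq{1.20}), since it comes only from the large-deviation rate in the overlap estimate.

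The main obstacle is the overlap estimate for $m<|\alpha-\beta|\leq t+m$, i.e.\ showing $\P(T_\alpha\geq at,T_\beta\geq at)\leq e^{-cm}\P(T_1\geq at)$ with $c$ independent of the choice of $C$ in \eq{1.20}. The cleanest route is a change of measure to $F_{\theta_a}$ applied to the union window of length $t+|\alpha-\beta|$: under $F_{\theta_a}$ the sum over that window has mean $a(t+|\alpha-\beta|)$ and the event $\{T_\alpha\geq at,T_\beta\geq at\}$ forces the sum over the symmetric difference of the two windows (length $2|\alpha-\beta|\geq 2m$) to exceed roughly its mean, so a Gaussian/moderate-deviation estimate gives the extra $e^{-cm}$; one must be slightly careful that the exponential tilt factor $e^{-\theta_a(\text{union sum})+\Psi(\theta_a)(t+|\alpha-\beta|)}$ is handled so that the bound comes out proportional to the single-window probability $\P(T_1\geq at)\sim e^{-(a\theta_a-\Psi(\theta_a))t}/t^{1/2}$ rather than to $e^{-(a\theta_a-\Psi(\theta_a))(t+|\alpha-\beta|)}$. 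This is a standard but somewhat delicate computation of the type already used in the literature on scan statistics, and I would carry it out by conditioning on the overlap sum and applying Lemma \ref{lemma1}-type local estimates to the two disjoint non-overlap blocks.
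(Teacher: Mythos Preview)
Your overall strategy---apply Theorem~\ref{t3} with $B_\alpha=\{\beta:|\alpha-\beta|\leq t+m\}$, note $b_3=0$, and bound $b_1,b_2$---matches the paper, and your treatment of $b_1$ and of the range $0<|\alpha-\beta|\leq m$ is fine. The genuine gap is in your $b_2$ estimate for $m<|\alpha-\beta|\leq t+m$. You propose the \emph{uniform} bound $\P(T_\alpha\geq at,T_\beta\geq at)\leq e^{-cm}\P(T_1\geq at)$ and then sum over $\sim 2t$ values of $\beta$, which produces $b_2\leq C(n-t+1)\P(T_1\geq at)\cdot t\,e^{-cm}$. This is too weak by a factor of $t$: the lemma demands $[t\P(T_1\geq at)+e^{-cm}]$, and your absorption argument (``$t\P(T_1\geq at)\leq C$'') does not convert $t e^{-cm}$ into $e^{-cm}$. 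The only way to kill that extra $t$ would be to use $m\geq c_0\log t$, but then the resulting constant $c$ in $e^{-cm}$ depends on the constant $C$ in \eqref{1.20}, which the paper explicitly needs to avoid (see the sentence after \eqref{1.11} and the choice \eqref{801}).

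The fix is to obtain a bound that decays in the gap, namely $\P(T_\beta\geq at\mid T_1\geq at)\leq Ce^{-c(\beta-1)}$ for $m+2\leq\beta\leq t$, so that the sum over $\beta$ is a geometric series giving $Ce^{-cm}$ with no leftover $t$. The paper does this not by tilting the union window as you suggest, but by splitting on the size of the non-overlap block $X_{t+1}+\dots+X_{t+\beta-1}$: either this independent block exceeds $(\mu_0+\delta)(\beta-1)$, which costs $e^{-c(\beta-1)}$ by a standard Cram\'er bound, or it does not, in which case the overlap block $X_\beta+\dots+X_t$ must exceed $at-(\mu_0+\delta)(\beta-1)$; the conditional probability of the latter given $T_1\geq at$ is then bounded by $Ce^{-c(\beta-1)}$ using the estimate of \cite{KoTu75}. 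Your sketch (``the symmetric difference is forced to exceed its mean under $F_{\theta_a}$'') does not hold as stated---the overlap block can be large instead---so even for a single pair $(\alpha,\beta)$ your proposed change-of-measure argument would need to be reorganized along these lines.
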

\begin{proof}
We apply Theorem \ref{t3} to bound the total variation distance between the distribution of $W$ and $Poi(\lambda_1)$. For each $1\leq \alpha \leq n-t+1$, define $B_\alpha=\{ 1\leq \beta \leq n-t+1: |\alpha-\beta|<t+m \}$. By definition of $B_\alpha$, $Y_\alpha$ in \eq{710} is independent of $\{Y_{\beta}: \beta\notin B_\alpha\}$. Therefore, $b_3$ in \eq{t3-2} equals zero. Since $|B_\alpha|< 2(t+m)$,
\be{
b_1=\sum_{1\leq \alpha \leq n-t+1}\sum_{\beta\in B_\alpha} \E Y_\alpha \E Y_\beta < 2(t+m)\lambda_1 \E Y_1.
}
By the definition of $Y_\alpha$, for $1\leq |\beta-\alpha|\leq m$, $\E Y_\alpha  Y_\beta=0$, and for $m<|\beta-\alpha|<t+m$, $\E Y_\alpha  Y_\beta \leq \E \widetilde{Y}_{\alpha\wedge \beta}  \widetilde{Y}_{\alpha\vee \beta}$
where $\widetilde{Y}_\alpha$ is defined in \eq{709}. Therefore, by symmetry,
\be{
b_2=\sum_{1\leq \alpha \leq n-t+1}\sum_{\alpha\ne \beta \in B_\alpha} \E Y_\alpha  Y_\beta<2(n-t+1)
\E \widetilde{Y}_1 \sum_{\beta=m+2}^{m+t} \P(T_\beta\geq at |T_1 \geq at).
}
For $\beta\geq t+1$, by independence,
\be{
 \P(T_\beta\geq at|T_1\geq at)= \P(T_1\geq at).
}
Let $\delta$ be the positive number defined above \eq{1.19} such that $\eq{1.19}$ is satisfied, and let $a_1$ be as in Lemma \ref{lemma2}.
We observe that for $m+2\leq \beta\leq t$, $T_\beta\geq at$ and $X_{t+1}+\dots +X_{t+\beta-1}\leq (\mu_0+\delta)(\beta-1)$ together imply $X_\beta+\dots + X_t\geq at -(\mu_0+\delta)(\beta-1)$. Therefore,
\bes{
&\sum_{\beta=m+2}^{t} \P(T_\beta \geq at|T_1\geq at)\\
&\leq \sum_{\beta=m+2}^{t} \big\{ \P(X_{t+1}+\dots+X_{t+\beta-1}>(\mu_0+\delta)(\beta-1)) \\
&\kern4em +\P \big( X_\beta+\dots +X_{t} \geq at -(\mu_0+\delta)(\beta-1)   \big| T_1\geq at \big)  \big\}.
}
For the first term, we have
\besn{\label{1.14}
&\sum_{\beta=m+2}^{t} \P(X_{t+1}+\dots +X_{t+\beta-1}>(\mu_0+\delta)(\beta-1))\\
&\leq \sum_{\beta=m+2}^{t} e^{-[\theta_{\mu_0+\delta}(\mu_0+\delta)-\Psi(\theta_{\mu_0+\delta})](\beta-1)}\\
&\leq \frac{e^{-[\theta_{\mu_0+\delta}(\mu_0+2\delta)-\Psi(\theta_{\mu_0+\delta})]m}}{1-e^{-[\theta_{\mu_0+\delta}(\mu_0+2\delta)-\Psi(\theta_{\mu_0+\delta})]}}.
}
By the bound on $V$ on page 613 of \cite{KoTu75}
and recalling that we have chosen $\delta$ such that $\Psi(\theta_a)-(\mu_0+\delta)\theta_a >0$, we have
\besn{\label{1.15}
&\sum_{\beta=m+2}^{t} \P \big( X_\beta+\dots +X_{t} \geq at - (\mu_0+\delta)(\beta -1)   \big| T_1\geq at \big)\\
&\leq C \sum_{\beta=m+2}^{t}  e^{-[\Psi(\theta_a)-(\mu_0+\delta)\theta_a](\beta-1)}
\sqrt{\frac{t}{t-\beta+1}}  \\
&\leq C \frac{e^{-[\Psi(\theta_a)-(\mu_0+\delta)\theta_a]m}}{(1- e^{-[\Psi(\theta_a)-(\mu_0+\delta)\theta_a]})}.                                                                                                                                                                                                                                                                                                        
}
Therefore,
\be{
b_2\leq C(n-t+1)\P(T_1\geq at) [m \P(T_1\geq at)+e^{-cm}].
}
Lemma \ref{lemma3} is then followed by Theorem \ref{t3} and the above bounds on $b_1$ and $b_2$.
\end{proof}

\begin{lemma}\label{lemma4}
Under the setting of Theorem \ref{t1}, we have
\be{
|\lambda_1 - \lambda_2|\leq C (n-t) \big[ \frac{m^2}{t} + e^{-c m} \big] \P(T_1\geq at)
}
where $\lambda_1$ and $\lambda_2$ are defined in \eq{704} and \eq{706}, $m$ is defined in \eq{1.20} and satisfies \eq{1.21}, and $T_1$ is defined in \eq{709}.
\end{lemma}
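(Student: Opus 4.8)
The plan is to reduce the claim to an estimate for $\E Y_1=\lambda_1/(n-t+1)$, condition on the value of $T_1$, and recognize, for the values of $T_1$ carrying the mass, the conditional law of the ``backward block'' of the sequence as an exponentially tilted random walk that, up to a total‑variation error, is exactly the walk $(D_k)$ entering $\lambda_2$. Embedding $\{X_1,\dots,X_n\}$ in the two‑sided i.i.d.\ sequence as in the proof of Theorem \ref{t1}, put $R_j=X_t+X_{t-1}+\dots+X_{t-j+1}$ (a partial sum of the last $j$ of $X_1,\dots,X_t$) and $L_j=X_0+X_{-1}+\dots+X_{1-j}$ (a partial sum of $j$ variables strictly to the left of the block), so that the telescoping identity $T_1-T_{1-j}=R_j-L_j$ gives
\be{
\E Y_1=\int_{at}^{\infty}\P\bigl(R_j-L_j>s-at,\ 1\le j\le m\,\big|\,T_1=s\bigr)\,d\P(T_1\le s).
}
Here $(L_1,L_2,\dots)$ is an $F$‑random walk independent of $X_1,\dots,X_t$, while $(R_1,\dots,R_m)$ is a function of the block; by exchangeability of $X_1,\dots,X_t$ the conditional law of $(R_1,\dots,R_m)$ given $T_1=s$ equals the law of the partial sums $\bigl(\sum_{i\le j}\wt X_i^{a_2}\bigr)_{j\le m}$ with $a_2=s/t$, where $\wt X_1^{a_2},\dots,\wt X_m^{a_2}$ are as in \eqref{719}.

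Split the integral at $s=at+\delta m$, with $\delta$ as in \eqref{1.19}. On $[at,at+\delta m]$ one has $a_2=s/t\le a+\delta m/t<a_1$ by \eqref{1.21}, so Lemma \ref{lemma2} couples $(R_1,\dots,R_m)$ given $T_1=s$ with the i.i.d.\ $F_{\theta_{a_2}}$‑partial sums at total‑variation cost $\le Cm/t$; moreover, since $a=\Psi'(\theta_a)$, $a_2=\Psi'(\theta_{a_2})$ and $\Psi''>0$ is bounded below on $[\theta_a,\theta_{a_1}]$, one gets $|\theta_{a_2}-\theta_a|\le C|a_2-a|\le C\delta m/t$, hence by standard exponential‑family estimates $d_{TV}\bigl(F_{\theta_{a_2}}^{\otimes m},F_{\theta_a}^{\otimes m}\bigr)\le Cm|a_2-a|\le C\delta m^2/t$. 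Composing the two couplings (the walk $(L_j)$ being left untouched) replaces $(R_j-L_j)_{j\le m}$, off an event of probability $\le Cm^2/t$, by $\bigl(\sum_{i\le j}(X_i^a-X_i)\bigr)_{j\le m}\eqlaw(D_j)_{j\le m}$, so the inner probability becomes $\P(D_j>s-at,\ j\le m)$. Truncating the infinite condition in $\lambda_2$ to its first $m$ steps costs $\P(D_j>s-at,\,j\le m)-\P(D_i>s-at,\,i\ge1)\le\P(\inf_{i>m}D_i\le s-at)$; since $(D_k)$ has drift $a-\mu_0$ and $s-at\le\delta m<(a-\mu_0)m/4$ by \eqref{1.19}, writing $\inf_{i>m}D_i=D_m+\inf_{j\ge1}(D_{m+j}-D_m)$, a Chernoff bound on $D_m$ together with the exponential lower tail of $\inf_jD_j$ gives $\P(\inf_{i>m}D_i\le s-at)\le Ce^{-cm}$ with $c$ depending only on $\delta$ and the exponential family. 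Integrating these per‑$s$ errors against $d\P(T_1\le s)$ bounds the difference between $\E Y_1$ and $\int_{at}^{at+\delta m}\P(D_i>s-at,\,i\ge1)\,d\P(T_1\le s)$ by $C(m^2/t+e^{-cm})\,\P(T_1\ge at)$.

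It remains to discard the tails $s>at+\delta m$. For $\lambda_2$ this is immediate: $\P(D_i>s-at,\,i\ge1)\le\P(D_1>s-at)\le Ce^{-c(s-at)}$ by the exponential moments of $D_1$, so $\int_{at+\delta m}^{\infty}\P(D_i>s-at,\,i\ge1)\,d\P(T_1\le s)\le Ce^{-c\delta m}\,\P(T_1\ge at)$. For $\E Y_1$, use $Y_1\le I(T_1\ge at,\,T_0<at)$, which by the $j=1$ case of the identity above reduces the tail contribution to $\P(T_1>at+\delta m,\,T_0<at)$; conditioning on $X_1+\dots+X_{t-1}$, applying a Chernoff bound in the free coordinate $X_t$ with parameter $\theta_a$, and invoking the sharp Petrov estimates underlying Lemma \ref{lemma1} (uniform over thresholds $at+u$ with $u=O(\log t)$, which give $\P(T_1\ge at+u)\le Ce^{-\theta_a u}\P(T_1\ge at)$) yields $\P(T_1>at+\delta m,\,T_0<at)\le Ce^{-\theta_a\delta m}\,\P(T_1\ge at)$. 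Collecting the pieces gives $|\E Y_1-\lambda_2/(n-t+1)|\le C(m^2/t+e^{-cm})\,\P(T_1\ge at)$, and multiplying by $n-t+1\le 2(n-t)$ yields the lemma; the argument is uniform in Cases 1 and 2, the case distinction entering only through Lemma \ref{lemma2}.

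The main obstacle is the bookkeeping in the middle paragraph: the splitting level must be chosen so that $a_2=s/t$ simultaneously remains in the interval $[a,a_1]$ on which Lemma \ref{lemma2} is valid---which is precisely what forces the constraint \eqref{1.21}---and so that the mismatch $|a_2-a|$ between the two tilts is itself of order $m/t$ on that range; it is the product of the two ``$m/t$‑type'' errors, the one from Lemma \ref{lemma2} and the one from perturbing the tilt, neither removable on its own, that produces the rate $m^2/t$ rather than $m/t$. One must also keep the constant $c$ in the $e^{-cm}$ terms independent of the constant later selected in \eqref{1.20}; this is why those terms are traced to drift and Chernoff estimates for the walks $(D_k)$ and $T_1$ rather than to the coupling bound \eqref{719}, and why the sharp (rather than crude Chernoff) tail asymptotics for $T_1$ are needed in order to retain the relative factor $\P(T_1\ge at)$.
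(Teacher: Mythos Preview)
Your argument is correct and follows the same skeleton as the paper's proof: condition on $T_1=s$, couple the conditional block to the $F_{\theta_a}$‑walk in two steps (Lemma~\ref{lemma2}, then tilt perturbation), truncate the infinite condition on $(D_i)$ to its first $m$ steps, and control the tail $s>at+\delta m$.

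Two execution differences are worth noting. First, for the truncation step the paper invokes the FKG inequality to obtain a multiplicative bound $\P(D_i>s-at,\,i\ge1)\ge\P(D_i>s-at,\,1\le i\le m)\bigl[1-Ce^{-cm}\bigr]$, whereas you use the simpler additive inequality $\P(D_i>s-at,\,1\le i\le m)-\P(D_i>s-at,\,i\ge1)\le\P(\inf_{i>m}D_i\le s-at)$; your route avoids FKG entirely and is a genuine simplification. Second, for the tail $\P(T_1>at+\delta m)$ the paper cites a Koml\'os--Tusn\'ady estimate, while you derive the same relative bound $Ce^{-\theta_a\delta m}\P(T_1\ge at)$ from Petrov's asymptotics; both are fine, though your detour through $\{T_0<at\}$ and ``the free coordinate $X_t$'' is unnecessary---the crude bound $\P(T_1>at+\delta m,\,T_0<at)\le\P(T_1>at+\delta m)$ already suffices.

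One small correction to your closing commentary: the $m^2/t$ is not a \emph{product} of two $m/t$‑type errors. The coupling errors are added, and the dominant one is the tilt perturbation, which by itself is of order $m\cdot|a_2-a|\le C m\cdot(\delta m/t)=Cm^2/t$; the Lemma~\ref{lemma2} contribution is only $Cm/t$.
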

\begin{proof}
By symmetry, we can write
\besn{\label{1.3}
&\E Y_1= \I(T_1\geq at, T_2<at,\dots, T_{m+1}<at)\\
&= \E \widetilde{Y}_1 (1-\widetilde{Y}_2)\dots(1-\widetilde{Y}_{m+1})\\
&\leq \int_{at}^{at+m \delta} \E \big[  (1-\widetilde{Y}_2)\dots(1-\widetilde{Y}_{m+1})   \big| T_1=s \big]            d \P(T_1\leq s)     + \P(T_1>at+m\delta)
}
where $\widetilde{Y}_\alpha$ is defined in \eq{709} and $\delta$ is the positive number defined above \eq{1.19} such that \eq{1.19} is satisfied. Observe that $T_1=s$ and $T_{i+1}<at$ imply $T_1-T_{i+1}=S_i-(S_{i+t}-S_t)>s-at$ where $S_i=\sum_{j=1}^i X_j$. Therefore, given $T_1=s$, $(1-\widetilde{Y}_2)\dots(1-\widetilde{Y}_{m+1})$ is the indicator of the event that 
$\{ \widetilde{S}_i^{s/t}-S_i>s-at, 1\leq i\leq m\}$ where $\widetilde{S}_i^{s/t}$ is independent of $S_i$,
\be{
\widetilde{S}_i^{s/t}=\sum_{j=1}^i  \widetilde{X}_j^{s/t}\quad \text{and} \quad 
\mathcal{L}\big( \widetilde{X}_i^{s/t}: 1\leq i\leq m \big)=\mathcal{L}\big(X_i: 1\leq i\leq m \big| S_t=s \big).
}
Note that the assumption $m<(a_1-a)t/\delta$ in \eq{1.21} implies $a\leq s/t\leq a_1$ for $s\in [at, at+m\delta]$.

By the definition of total variation distance, for $at\leq s\leq at+m\delta$ and $m\geq \nu$,
\besn{\label{806}
&d_{TV}\Big(\mathcal{L}\big( X_i^{s/t}: 1\leq i\leq m \big), \mathcal{L}\big( X_i^a: 1\leq i\leq m \big)    \Big)\\
&\leq \E_{\theta_{s/t}}I(S_{m}>t/m) + \E_{\theta_a} I(S_{m}>t/m)\\
&\quad+\E_{\theta_a} \big| e^{(\theta_{s/t}-\theta_{a})S_{m}-m(\Psi(\theta_{s/t})-\Psi(\theta_a))} -1 \big|I(S_{m}\leq t/m).
}

For $s/t\in [a,a_1]$ and $s/t-a\leq m\delta/t$, we have
\besn{\label{sstar1}
&|\theta_{s/t}-\theta_a|\leq \sup_{\theta_a\leq \theta\leq \theta_{a_1}}\frac{1}{\Psi''(\theta)}(s/t-a)\leq \sup_{\theta_a\leq \theta\leq \theta_{a_1}}\frac{m\delta}{\Psi''(\theta)t} ,\\
&|\Psi(\theta_{s/t})-\Psi(\theta_a)|\leq \sup_{\theta_a\leq \theta\leq \theta_{a_1}} |\Psi'(\theta)|(s/t-a)\leq \sup_{\theta_a\leq \theta\leq \theta_{a_1}} |\Psi'(\theta)| \frac{m\delta}{t}.
}
This implies that if $a\leq s/t\leq a+m\delta/t$, $S_m\leq t/m$ and $m\leq \sqrt{t}$, then
\ben{\label{807}
(\theta_{s/t}-\theta_{a})S_{m}-m(\Psi(\theta_{s/t})-\Psi(\theta_a))\leq C.
}
By \eq{806}, Markov's inequality, \eq{807} and the fact that $|e^t-1|\leq Ct$ for bounded $t$, we have, for $at\leq s\leq at+m\delta$,
\besn{\label{sstar2}
&d_{TV}\Big(\mathcal{L}\big( X_i^{s/t}: 1\leq i\leq m \big), \mathcal{L}\big( X_i^a: 1\leq i\leq m \big)    \Big)\\
&\leq \frac{m}{t}(\E_{\theta_{s/t}}|S_m|+\E_{\theta_a}|S_m|)+C\E_{\theta_a}|(\theta_{s/t}-\theta_{a})S_{m}-m(\Psi(\theta_{s/t})-\Psi(\theta_a))  |\\
&\leq Cm^2/t
}
where in the last inequality we used $s/t\in [a,a_1]$, $\E_{\theta} |S_m|\leq Cm$ for $\theta\in [\theta_a, \theta_{a_1}]$ and \eq{sstar1}.
By \eq{1.3} and the argument just below it, \eq{719} and \eq{sstar2}, we have
\besn{\label{1.7}
&\E Y_1 \leq \int_{at}^{at+m\delta} \big[ \P(D_i>s-at, 1\leq i\leq m)    +C\frac{m^2}{t} \big]        d \P(T_1\leq s) \\
&\kern3em    + \P(T_1>at+m\delta).
}
By \eq{1.19}, $at\leq s\leq at+m\delta$ implies
\ben{\label{712}
s-at-m(a-\mu_0)/2<m(\mu_0-a)/4.
}
From the FKG inequality (cf. (1.7) of \cite{KaRi80}) and the fact that $\I(D_i>s-at, 1\leq i\leq m)$ and $\I(D_i>s-at, m_1\geq i>m)$ are both increasing functions of $\{X_1^a-X_1, \dots, X_{m_1}^a-X_{m_1}\}$ for $m_1>m$,
we have
\besn{\label{1.4}
& \P(D_i> s-at, m_1\geq i\geq 1)\\
& = \P(D_i> s-at, 1\leq i\leq m) \P(D_i> s-at, m_1\geq i> m |D_i> s-at, 1\leq i\leq m )\\
& \geq \P(D_i> s-at, 1\leq i\leq m) \P( D_i> s-at, m_1\geq i> m )\\
& \geq \P(D_i> s-at, 1\leq i\leq m) \P\big(D_i> s-at, i> m , D_{m} \geq m(a-\mu_0)/2\big)\\
& \geq \P(D_i> s-at, 1\leq i\leq m)\\
&\quad \times \Big\{  1-\P\big( D_{m} <m(a-\mu_0)/2 \big) - \sum_{i=1}^\infty \P\big( D_i< m(\mu_0-a)/4   \big) \Big\},
}
where the last inequality in \eq{1.4} follows from \eq{712}.
Letting $m_1 \to \infty$ in \eq{1.4}, we have
\besn{\label{1.41}
& \P(D_i> s-at, i\geq 1)\\
& \geq \P(D_i> s-at, 1\leq i\leq m)\\
&\quad \times \Big\{  1-\P\big( D_{m} <m(a-\mu_0)/2 \big) - \sum_{i=1}^\infty \P\big( D_i< m(\mu_0-a)/4   \big) \Big\}.
}
For $0<r\leq \theta_a$, we have
\be{
\E \exp(-r D_i)=\exp \big\{ -i\big[ \Psi(\theta_a)-\Psi(r)-\Psi(\theta_a -r)   \big]   \big\}.
}
By Taylor's expansion,
\be{
\Psi(\theta_a)-\Psi(\theta_a-r)=ra-\frac{r^2}{2} \Psi''(\theta_a-r_1), \quad 
-\Psi(r)=-r\mu_0-\frac{r^2}{2} \Psi''(r_2)
}
where $0\leq r_1, r_2\leq r$. Therefore,
\besn{\label{808}
&\P\big( D_{m} <m(a-\mu_0)/2\big)\\
&\leq \exp \big\{ -m\big[ \Psi(\theta_a)-\Psi(r)-\Psi(\theta_a -r)  - \frac{(a-\mu_0)r}{2}  \big]   \big\} \\
&=\exp \big\{ -m\big[ \frac{r}{2}(a-\mu_0) -\frac{r^2}{2} \big( \Psi''(\theta_a-r_1) +\Psi''(r_2) \big) \big]  \big\}.
}
Let
\be{
c_1=\frac{a-\mu_0}{4\theta_a }\vee  \max_{\theta\in \Theta: 0\leq \theta \leq \theta_{a}} \Psi''(\theta).
}
Choosing $r=(a-\mu_0)/(4c_1)$ in \eq{808}, we have
\ben{\label{1.5}
\P\big( D_{m} <m(a-\mu_0)/2\big)\leq \exp \big\{ - \frac{(a-\mu_0)^2}{16c_1} m  \big\}.
}
Similarly,
\ben{\label{1.6}
\P\big( D_i< m(\mu_0-a)/4) \leq \exp\big\{ - \frac{(a-\mu_0)^2}{16c_1}  i -\frac{(a-\mu_0)^2}{16c_1} m \big\}.
}
Applying \eq{1.5} and \eq{1.6} in \eq{1.41}, we obtain
\bes{
&\P(D_i> s-at, 1\leq i\leq m)\\
& \leq 
\P(D_i> s-at, i\geq 1)+2 \exp \big\{ - \frac{(a-\mu_0)^2}{16c_1} m  \big\} \big/ \big( 1- \exp \big\{ - \frac{(a-\mu_0)^2}{16c_1}   \big\} \big).
}
Therefore, by \eq{1.7},
\besn{\label{1.8}
&\E Y_1-\lambda_2/(n-t+1)\\
& \leq  \bigg\{ \frac{2\exp\big\{ -m(a-\mu_0)^2/(16c_1)  \big\}}{1-\exp\big\{-(a-\mu_0)^2/(16c_1) \big\}} +C \frac{m^2}{t} +\frac{\P(T_1>at+m\delta)}{\P(T_1\geq at)}  \bigg\} \P(T_1\geq at).
}
From the corollary on page 611 of \cite{KoTu75}, and recalling that in proving \eq{t1-1}, we can only consider those $t$ larger than any given constant, we have
\ben{\label{1.9}
 \P(T_1>at+m\delta |T_1\geq at)\leq C e^{-\theta_a m\delta}.
}
After proving a similar and easier lower bound of $\E Y_1$, we obtain Lemma \ref{lemma4}.
\end{proof}

\begin{lemma}\label{lemma5}
Under the setting of Theorem \ref{t1}, we have
\be{
\lambda_2=\lambda \big(1+O(\frac{(\log t)^2}{t}) \big)
}
where $\lambda_2$ is define in \eq{706}.
\end{lemma}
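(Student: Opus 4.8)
The plan is to show that the definition of $\lambda_2$ in \eqref{706} is, up to a relative error of order $(\log t)^2/t$, exactly the explicit quantity $\lambda$ appearing in \eqref{t1-2} (Case 1) or \eqref{t1-3} (Case 2). The starting point is to write $\lambda_2 = (n-t+1)\int_{at}^\infty \P(D_i > s-at,\ i\geq 1)\, d\P(T_1\leq s)$ and to make the substitution $u = s - at$, so that $\lambda_2 = (n-t+1)\int_0^\infty \P(\inf_{i\geq 1} D_i > u)\, d\P(T_1 - at \leq u)$. Two classical ingredients then enter. First, the local limit behaviour of $T_1 = X_1+\dots+X_t$ near its large-deviation point $at$: by Lemma \ref{lemma1} and the accompanying Edgeworth-type expansion of \cite{Pe65} (in Case 1) or the corresponding lattice local limit theorem (in Case 2), the tilted density (resp.\ probability mass function) of $T_1-at$ at a point $u = O(\log t)$ is $e^{-(a\theta_a-\Psi(\theta_a))t}\, e^{-\theta_a u}\big/(\sigma_a\sqrt{2\pi t})$ times $1 + O((\log t)^2/t)$, uniformly for $0\leq u \leq C\log t$. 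Second, the ladder-height identity for the random walk $(D_k)$: since $\IE D_1 = a - \mu_0 > 0$, the quantity $\P(\inf_{i\geq 1} D_i > u)$ is, by Spitzer's identity and the Pollaczek--Khinchine/Baxter formula, expressible through $\IE[1 - e^{-\theta_a D_{\tau_+}}]/\IE(\tau_+)$, whose logarithm is $-\sum_{k\geq 1} k^{-1}\IE(e^{-\theta_a D_k^+})$ up to the explicit prefactor $(a-\mu_0)\theta_a$ (Case 1) or $(a-\mu_0)(1-e^{-\theta_a})$ (Case 2); this is precisely Remark \ref{r3}.

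The key steps, in order, are as follows. (i) Truncate the integral at $u = C\log t$: the tail contribution from $u > C\log t$ is bounded, using the large-deviation bound of \cite{KoTu75} on $\P(T_1 - at > C\log t \mid T_1\geq at)$ as in \eqref{1.9}, by $e^{-\theta_a C\log t}\P(T_1\geq at)$, which is $O(t^{-1})$ relative to $\lambda_2$ once $C$ is large enough; the same truncation is harmless for $\lambda$ since the factor $e^{-\theta_a u}$ there makes the corresponding tail equally small. (ii) On $0\leq u\leq C\log t$, replace $d\P(T_1\leq at+u)$ by its local-limit approximation; the error is the relative $O((\log t)^2/t)$ factor, which propagates into $\lambda_2$ with the same order because the integrand $\P(\inf_{i} D_i > u)$ is between $0$ and $1$ and the total mass is comparable to $\P(T_1\geq at)$. (iii) Integrate the main term: in Case 1 one gets $(n-t+1)\,e^{-(a\theta_a-\Psi(\theta_a))t}\big/(\sigma_a\sqrt{2\pi t}) \cdot \int_0^\infty \theta_a e^{-\theta_a u}\P(\inf_i D_i > u)\,du$, and by integration by parts the last integral equals $\IE[1 - e^{-\theta_a \inf_{i\geq 1}D_i}] \cdot$(normalisation), which by the renewal/ladder identity quoted in Remark \ref{r3} equals $\theta_a^{-1} \cdot \frac{1}{\theta_a(a-\mu_0)}\exp[-\sum_{k\geq1}k^{-1}\IE(e^{-\theta_a D_k^+})]$; collecting constants gives exactly \eqref{t1-2}. (iv) In Case 2 the integral becomes a sum over $u\in\IZ_{\geq 0}$ shifted by $\lceil at\rceil - at$, and the geometric sum $\sum_{u\geq 0} e^{-\theta_a u}$ produces the factor $1/(1-e^{-\theta_a})$ and the offset $e^{-\theta_a(\lceil at\rceil - at)}$, yielding \eqref{t1-3}; the ladder identity here uses the lattice version with prefactor $(a-\mu_0)(1-e^{-\theta_a})$ instead of $(a-\mu_0)\theta_a$.

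The main obstacle is step (ii): one needs the local limit approximation to $d\P(T_1\leq at+u)$ to hold with a \emph{uniform} relative error of order $(\log t)^2/t$ over the whole range $0\leq u\leq C\log t$, not merely pointwise or for fixed $u$. This is exactly where the smoothness hypothesis \eqref{702} (Case 1) or the span-$1$ lattice assumption \eqref{701} (Case 2) is used, via the strong (tilted) local central limit theorems of \cite{Pe65}; the factor $(\log t)^2$ rather than $\log t$ arises because the Edgeworth correction at a point $at + u$ with $u$ up to $C\log t$ shifts the effective tilt by $O(u/t)$, contributing a relative error $O((\log t)^2/t)$ after accounting for both the change in the normalising constant and the second-order term in the expansion. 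The ladder-height identities of Remark \ref{r3}, being classical (\cite{Wo79}, \cite{TuSi99}), can be cited directly, so the only real work is the careful uniform local limit estimate and bookkeeping of constants.
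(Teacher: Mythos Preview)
Your plan matches the paper's proof step for step: truncate at $u=x_0=(\log t)/\theta_a$, tilt to $F_{\theta_a}$, apply the local CLT for $T_1$ under $\P_{\theta_a}$ uniformly on $[0,x_0]$ (this is where the $(\log t)^2/t$ arises, exactly as you diagnose), and then evaluate the remaining integral via Spitzer/Woodroofe.

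The only issue is bookkeeping in your step~(iii). After the tilt $d\P(T_1\leq at+u)=e^{-(a\theta_a-\Psi(\theta_a))t}e^{-\theta_a u}\,d\P_{\theta_a}(T_1\leq at+u)$ and the local CLT $d\P_{\theta_a}(T_1\leq at+u)\approx du/(\sigma_a\sqrt{2\pi t})$, the main term is
\[
\frac{(n-t+1)\,e^{-(a\theta_a-\Psi(\theta_a))t}}{\sigma_a\sqrt{2\pi t}}\int_0^\infty e^{-\theta_a u}\,\P(D_i>u,\ i\geq 1)\,du,
\]
with no extra $\theta_a$ in the integrand. The integral itself is handled in the paper by first invoking Woodroofe's identity $\P(D_i>u,\ i\geq 1)=\P(D_{\tau_+}>u)/\E\tau_+$ (Theorem~2.7 of \cite{Wo82}, not Remark~\ref{r3}), then integrating by parts and applying Corollaries~2.4 and~2.7 of \cite{Wo82}, yielding
\[
\int_0^\infty e^{-\theta_a u}\,\P(D_i>u,\ i\geq 1)\,du=\frac{1}{\theta_a}\exp\Bigl[-\sum_{k\geq 1}\frac1k\,\E\bigl(e^{-\theta_a D_k^+}\bigr)\Bigr],
\]
with \emph{no} factor of $(a-\mu_0)$. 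The $(a-\mu_0)\theta_a$ you extracted from Remark~\ref{r3} sits inside the numerical integral for the series itself, not as a separate prefactor; pulling it out double-counts. With these two corrections the constants collapse exactly to \eqref{t1-2} and \eqref{t1-3}.
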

\begin{proof}
We first consider Case 1 of Theorem \ref{t1}.
By the proof of Theorem 2.7 of \cite{Wo82}, we have for $x\geq 0$,
\ben{\label{1.16}
\P(D_i> x, i\geq 1)=\frac{\P (D_{\tau_+}>x)}{\E \tau_{+}} 
}
where
$\tau_{+}=\inf\{i\geq 1, D_i> 0\}$.
Let
$x_0=\log t /\theta_a$.
By change of variable and \eq{1.2}, we have
\besn{\label{101}
\lambda_2&=(n-t+1)\int_0^{\infty} \P(D_i>x, i\geq 1) d\P (T_1\leq at+x)\\
&=(n-t+1) e^{-(a\theta_a -\Psi(\theta_a))t} \\
&\quad \times \int_0^{x_0} \P(D_i>x, i\geq 1)e^{-\theta_a x} d\P_{\theta_{a}} (T_1\leq at+x)\\
&\quad +O((n-t+1) \P (T_1>at+x_0)),
}
where $T_\alpha$ is defined in \eq{709}.
By the local central limit theorem (cf. \cite{Fe71}), uniformly for $0\leq x\leq x_0$,
\ben{\label{1.17}
d\P_{\theta_{a}} (T_1\leq at+x)=\frac{1}{\sigma_a (2\pi t)^{1/2}}+O(\frac{(\log t)^2}{t^{3/2}}).
}
From \eq{1.9} and Lemma \ref{lemma1}, we have
\besn{\label{sstar3}
&\P(T_1>at+x_0)=\P(T_1>at+x_0|T_1\geq at)\P(T_1\geq at)\\
&\leq C e^{-\theta_a x_0}\frac{e^{-(a\theta_a-\Psi(\theta_a))t}}{\sqrt{t}}
\leq C\frac{e^{-(a\theta_a-\Psi(\theta_a))t}}{\sqrt{t}} \frac{1}{t}.
}

Applying \eq{1.16}, \eq{1.17} and \eq{sstar3} in \eq{101}, we obtain
\bes{
\lambda_2&=\frac{(n-t+1) e^{-(a\theta_a -\Psi(\theta_a))t}}{(\E \tau_{+}) \sigma_a (2\pi t)^{1/2}}\\
&\quad \times \int_0^{x_0} \P (D_{\tau_+}>x)e^{-\theta_a x}\big(1+O(\frac{(\log t)^2}{t})\big) dx\\
&\quad +O((n-t+1) \P (T_1>at+x_0))\\
&=\frac{(n-t+1)  e^{-(a\theta_a -\Psi(\theta_a))t}}{(\E \tau_{+}) \sigma_a (2\pi t)^{1/2}}\\
&\quad \times \left[ \int_0^{\infty} \P(D_{\tau_+}> x)e^{-\theta_a x}\big(1+O(\frac{(\log t)^2}{t})\big) dx+O(\frac{1}{t}) \right].
}
By the integration by parts formula, we have
\besn{\label{star9}
&\frac{1}{\E \tau_+} \int_0^{\infty} \P(D_{\tau_+}>x)e^{-\theta_a x} dx\\
&=\frac{1}{\theta_a \E \tau_+}\Big[ 1-\E e^{-\theta_a D_{\tau_+}}  \Big]\\
&=\frac{1}{\theta_a \E \tau_+} \exp\big[-\sum_{k=1}^\infty k^{-1}\E(e^{-\theta_a D_k}, D_k> 0) \big] \\
&=\frac{1}{\theta_a} \exp\big[-\sum_{k=1}^\infty k^{-1}\E(e^{-\theta_a D_k^+}) \big]
}
where we used the first equality in the proof of Corollary 2.7 of \cite{Wo82} and Corollary 2.4 of \cite{Wo82}.
Therefore,
\bes{
\lambda_2&=\frac{ (n-t+1) e^{-(a\theta_a -\Psi(\theta_a))t}}{\theta_a  \sigma_a (2\pi t)^{1/2}}  \\
&\quad\times \exp\big[-\sum_{k=1}^\infty k^{-1}\E (e^{-\theta_a D_k^+}) \big]  \big(1+O(\frac{(\log t)^2}{t})\big).
}

Next we consider Case 2 of Theorem \ref{t1}. The calculation of $\lambda_2$ is similar to Case 1 except that we have, for integers $0\leq k\leq x_0$,
\be{
\P_{\theta_{a}} (T_1= \lceil at\rceil+k)=\frac{1}{\sigma_a (2\pi t)^{1/2}}+O(\frac{(\log t)^2}{t^{3/2}})
}
and
\besn{\label{star8}
&\sum_{k=0}^\infty \P(D_{\tau_+}>\lceil at \rceil -at+k) e^{-\theta_a (\lceil at \rceil -at+k)}\\
&=e^{-\theta_a(\lceil at \rceil -at)}\sum_{k=0}^\infty \P(D_{\tau_+}>k)e^{-\theta_a k}\\
&=\frac{e^{-\theta_a (\lceil at \rceil -at)}}{1-e^{-\theta_a }}\big[ 1-\E e^{-\theta_a D_{\tau_+}} \big].
}
Therefore, for the arithmetic case,
\bes{
&\lambda_2=\frac{ (n-t+1) e^{-(a\theta_a -\Psi(\theta_a))t} e^{-\theta_a (\lceil at \rceil -at)}}
{(1-e^{-\theta_a })\sigma_a (2\pi t)^{1/2}}\\
&\qquad \times \exp\big[-\sum_{k=1}^\infty k^{-1}\E (e^{-\theta_a D_k^+}) \big]  \big(1+O(\frac{(\log t)^2}{t})\big).
}

\end{proof}

\subsection{Proof of Corollary \ref{c1}}
Following the proof of Theorem \ref{t1}, let
\be{
Y_1=\I(X_1=\dots=X_t=a)
}
and for $2\le \alpha\le n-t+1$,
\be{
Y_\alpha=\I(X_{\alpha-1}<a, X_\alpha=\dots=X_{\alpha+t-1}=a).
}
Then with $W=\sum_{\alpha=1}^{n-t+1} Y_\alpha$,
\be{
\E W=p_a^t + (n-t) p_a^t (1-p_a)=\lambda.
}
Instead of \eq{1.1}, we have $\P(M_{n;t}\geq at)=\P(W\geq 1)$, and instead of \eq{1.11}, we have
\be{
|\P(W\geq 1)-(1-e^{-\lambda})|\leq (1\wedge \frac{1}{\lambda}) (n-t+1)(2t+1)p_a^{2t}.
}
This proves the bound \eq{c1-1}.

\subsection{Proof of Theorem \ref{t2}}
In this subsection, let $C$ and $c$ denote positive constants depending on the exponential family $F_\theta$ and $\theta_1$, and may represent different values in different expressions.
The lemmas used in the proof of Theorem \ref{t2} will be stated and proved after the proof.

\begin{proof}[Proof of Theorem \ref{t2}]
Recall $S_i=\sum_{k=1}^i X_k$. Define $\tau_+=\inf\{n\geq 1: S_n>0\}$ and
\ben{\label{1001}
\tau_b:=\inf \{n\geq 1: S_n\geq b\},\quad T_b:= \inf \{n\geq 1: S_n\notin [0, b)\}.
}
If $b$ is bounded, then by choosing $C$ to be large enough in \eq{t2-1},
and observing that
\ben{\label{1002}
\lambda\sim (n-\frac{b}{\mu_1}) e^{-\theta_1 b},
}
we have
$C\lambda b^{1/2} h(b)/(n-b/\mu_1)\geq 1$ and \eq{t2-1} is trivial.
Therefore, in the following we can assume $b$ is larger than any given constant.
Moreover, since we assume $h(b)=O(b^{1/2})$ in the theorem, by choosing $C$ to be large enough
and $c$ to be small enough in \eq{t2-1}, we only need to consider the case where $h(b)/b^{1/2}$ is smaller than any given positive constant. In particular, we can assume
\ben{\label{star6}
\frac{h(b)}{b^{1/2}}\leq \min \Big\{\frac{2}{\mu_1}, 
\frac{2(\theta_1'-\theta_1)\sup_{0\leq \theta\leq \theta_1} \Psi''(\theta)}{\mu_1^2} \Big\}
}  
for some $\theta_1'\in \Theta$ and $\theta_1<\theta_1'<2\theta_1$.

We embed the sequence $\{X_1,\dots, X_n\}$ into an infinite i.i.d. sequence 
\be{
\{\dots,X_{-1},X_0,X_1,\dots\}.
}
For a positive integer $m$, let $\omega_m^+$ be the $m$-shifted sample path of $\omega:=\{X_1,\dots, X_n\}$, so $S_i(\omega_m^+)=S_{m+i}(\omega)-S_m(\omega)$,
$T_b(\omega_m^+)=\inf\{n\geq 1: S_n(\omega_m^+)\notin [0,b)\}$, and $\tau_b(\omega_m^+)$, $\tau_+(\omega_m^+)$ are defined similarly.
Let $t=\lceil \frac{b}{\mu_1}+b^{1/2}h(b) \rceil$ and $m=\lfloor c h^2(b) \rfloor$ such that $m<t$. For $1\leq \alpha\leq n-t$, let
\ben{\label{1101}
Y_\alpha=\I \big( S_\alpha< S_{\alpha-\beta}, \forall\ 1\leq \beta\leq m; \ T_b(\omega_{\alpha}^+)\leq t, \ S_{T_b}(\omega_\alpha^+)\geq b   \big).
}
That is, $Y_\alpha$ is the indicator of the event that the sequence $\{S_i\}$ reaches a local minimum at $\alpha$ and the $\alpha$-shifted sequence $\{S_i(\omega_\alpha^+)\}$ exits the interval $[0,b)$ within time $t$ and the first exiting position is $b$. Let
\ben{\label{903}
W=\sum_{\alpha=1}^{n-t}Y_\alpha.
}
In the following, we first compare $p_{n,b}$ with $\IP (W\geq 1)$. Then, we approximate the distribution of $W$ by the Poisson distribution with mean $\IE (W)$. Finally, we calculate approximately $\IE (W)$. 

First, from the definition of $W$, we have $p_{n,b}\geq \IP (W\geq 1)$ and with $t_1=\lfloor b/\mu_1- b^{1/2} h(b) \rfloor$,
\bes{
&\{\max_{0\leq i<j\leq n} (S_j-S_i)\geq b\}\backslash \{W\geq 1\}\\
&\subset \left( \cup_{k=0}^{n-t-1}\big\{S_{T_b}(\omega_k^+)\geq b, T_b(\omega_k^+)>t  \big\}  \right)\\
&\quad  \cup \left( \cup_{k\in [0,m]\cup (n-t,n-t_1)} \big\{ S_{T_b}(\omega_k^+)\geq b, T_b(\omega_k^+)\leq t  \big\} \right)\\
&\quad \cup \left(\cup_{n-t_1\leq i<j\leq n}\big\{ S_j-S_i\geq b \big\} \right).
}
By symmetry,
\besn{\label{3.1}
& p_{n,b}-\IP (W\geq 1)\\
&\leq (n-t)\IP  (S_{T_b}\geq b, T_b>t)+(m+2b^{1/2} h(b) +2) \IP (S_{T_b}\geq b, T_b\leq t)\\
&\quad +\E \I(\cup_{0\leq i<j\leq t_1}\{S_j-S_i\geq b\} ) .
}
By \eq{l-1-1} and Lemma \ref{l1}, we have
\ben{\label{sstar6}
\P(S_{T_b}\geq b)=\E_{\theta_1}[e^{-\theta_1 S_{T_b}} \I(S_{T_b}\geq b)]\leq e^{-\theta_1 b}
}
and
\besn{\label{sstar7}
&\IP  (S_{T_b}\geq b, T_b>t)=\E_{\theta_1} [e^{-\theta_1 S_{T_b}} \I(S_{T_b}\geq b, T_b>t)]\\
&\leq e^{-\theta_1 b} \P_{\theta_1}(T_b>t)
\leq Ce^{-\theta_1 b-ch^2(b)}.
}
Along with Lemma \ref{l3}, we have
\besn{\label{star10}
&p_{n,b}-\IP (W\geq 1)\\
& \leq C(n-b/\mu_1)e^{-\theta_1 b}
\big\{ e^{-ch^2(b)}+\frac{m+b^{1/2}h(b)}{n-b/\mu_1}+\frac{b/h^2(b)}{n-b/\mu_1} e^{-ch^2(b)}  \big\}.
}

Next, we use Theorem \ref{t3} to obtain a bound on the total variation distance between the distribution of $W$ and $Poi(\lambda_1)$ with 
\ben{\label{904}
\lambda_1:=\E (W)=(n-t)\E Y_\alpha.
}
For each $1\leq \alpha\leq n-t$, let $B_\alpha=\{1\leq \beta\leq n-t: |\beta-\alpha|\leq t+m\}$.
In applying Theorem \ref{t3}, by our definition of $B_\alpha$, $b_3=0$. From $|B_\alpha|\leq 2(t+m)+1$ and \eq{sstar6}, we have
\ben{\label{sstar4}
b_1<[2(t+m)+1]\lambda_1 \E Y_\alpha\leq C(n-t)(t+m)\P^2(S_{T_b}\geq b)\leq C(n-t)(t+m)e^{-2\theta_1 b}.
}
Let
\be{
\tilde{Y}_\alpha=\I(T_b(\omega_\alpha^+)\leq t, S_{T_b}(\omega_\alpha^+)\geq b).
}
We have for $b_2$ in \eq{t3-2},
\bes{
b_2 \leq & \sum_{\alpha=1}^{n-t} \sum_{\alpha\ne \beta\in B_\alpha} \E(Y_\alpha Y_\beta)\\
&\leq 2\sum_{\beta=1}^{n-t} \left[ \sum_{\beta-t-m\leq \alpha<\beta-m} \E Y_\beta \tilde{Y}_\alpha
+\sum_{\beta-m\leq \alpha\leq \beta-1} \E Y_\beta \tilde{Y}_\alpha   \right].
}
For $\beta-t-m\leq \alpha<\beta-m$, because $S_\beta<S_\alpha$ implies $T_b(w_\alpha^+)\leq \beta-\alpha$, we have
\bes{
\E Y_\beta \tilde{Y}_\alpha&=\E Y_\beta \tilde{Y}_\alpha [\I(S_\beta\geq S_\alpha)+\I(S_\beta< S_\alpha)]\\
&\leq \E \I(S_\beta-S_\alpha\geq 0) \tilde{Y}_\beta 
+\E I(S_{T_b}(\omega_\alpha^+)\geq b, T_b(\omega_\alpha^+)\leq \beta-\alpha) \tilde{Y}_\beta.
}
By independence and symmetry, we have
\be{
\sum_{\beta-t-m\leq \alpha<\beta-m}\E Y_\beta \tilde{Y}_\alpha \leq \E \tilde{Y}_1 
\sum_{i=m}^{t+m}[\P(S_i\geq 0)+\P(S_{T_b}\geq b, T_b\leq t+m)].
}
For $\beta-m\leq \alpha\leq \beta-1$, because $Y_\beta=1$ implies $S_\alpha>S_\beta$, which in turn implies $T_b(w_\alpha^+)\leq \beta-\alpha$, we have
\bes{
\sum_{\beta-m\leq \alpha\leq \beta-1} \E Y_\beta \tilde{Y}_\alpha 
&\leq \sum_{\beta-m\leq \alpha\leq \beta-1} \E\tilde{Y}_\beta \I(S_{T_b}(\omega_\alpha^+)\geq b, T_b(\omega_\alpha^+)\leq \beta-\alpha )\\
&\leq \E \tilde{Y}_1 \sum_{i=1}^m \P(S_{T_b}\geq b, T_b\leq i).
}
Therefore, by Lemma \ref{l2} and \eq{sstar6},
\besn{\label{sstar5}
b_2&\leq 2(n-t)\E \tilde{Y}_1 \Big[ \sum_{i=m}^{t+m} \big( \P(S_i\geq 0)+\P(S_{T_b}\geq b, T_b\leq t+m)  \big)\\
&\kern8em +\sum_{i=1}^m \P(S_{T_b}\geq b, T_b\leq i)  \Big]\\
&\leq 2(n-t)e^{-\theta_1 b} \big[ Ce^{-cm} +(t+m) e^{-\theta_1 b}  \big].
}
From \eq{t3-1}, \eq{sstar4} and \eq{sstar5}, we obtain
\ben{\label{3.2}
\big|\P(W\geq 1)-(1-e^{-\lambda_1})\big|\leq C(n-t) e^{-\theta_1 b} \big[ (t+m)e^{-\theta_1 b} +e^{-cm} \big].
}
Since $\lambda_1$ does not have an explicit expression, our final goal is to show that $\lambda_1$ is close to $\lambda$. For this purpose, we first introduce an intermediate quantity $\lambda_2$ defined as
\ben{\label{905}
\lambda_2=(n-t) \IP (\tau_0=\infty)\IP (S_{T_b}\geq b).
}
Recall
\bes{
&\lambda_1=(n-t)\E Y_\alpha\\
&=(n-t)\P(S_{\alpha-\beta}-S_\alpha>0,\ \forall\ 1\leq \beta\leq m) \P(T_b(\omega_\alpha^+)\leq t, S_{T_b}(\omega_\alpha^+)\geq b)\\
&=(n-t)\P(\tau_0>m)\P(T_b\leq t, S_{T_b}\geq b).
}
From the upper and lower bounds of their difference
\be{
\lambda_2-\lambda_1\leq (n-t) \IP (T_b>t, S_{T_b}\geq b),
}
\be{
\lambda_1-\lambda_2\leq (n-t) \IP  (S_{T_b}\geq b) \IP  (m< \tau_0 <\infty),
}
we have
\besn{\label{3.3}
|\lambda_1-\lambda_2|&\leq C(n-t) e^{-\theta_1 b-ch^2(b)}+(n-t) e^{-\theta_1 b}\sum_{i=m}^\infty \P(S_i\geq 0)\\
&\leq C(n-t) e^{-\theta_1 b}[e^{-c h^2(b)}+e^{-cm}]
}
where we used \eq{sstar7}, \eq{sstar6} and Lemma \ref{l1}.

Finally, in Lemma \ref{lemma10}, we 
will show that 
\ben{\label{star11}
\lambda_2=\big[ 1+O(\frac{b^{1/2}h(b)}{n-b/\mu_1}) \big] \lambda +(n-t)e^{-\theta_1 b} o(e^{-cb}).
}
Theorem \ref{t2} is proved by combining \eq{star10}, \eq{3.2}, \eq{3.3} and \eq{star11} and using \eq{1002}.
\end{proof}

The following lemmas have been used in the above proof.
\begin{lemma}\label{l-1}
Let $\{X_1,\dots, X_n\}$ be independent, identically
distributed  random variables with distribution function $F$ that
can be imbedded in an exponential family, as in \eq{1.2}.  Let  $\E X_1=\mu_0<0$. 
Let $S_0=0$ and
$S_i=\sum_{k=1}^i X_k$ for $1\leq i\leq n$.  
Suppose there exist $\theta_1>0$ such that $ \Psi(\theta_1)=0$.
Let $\mathcal{F}_n=\sigma\{X_1,\dots, X_n\}$, and let $T$ be a stopping time with respect to $\{\mathcal{F}_n\}$. Then we have
\ben{\label{l-1-1}
\P(G\cap \{T<\infty\})=\E_{\theta_1} \big[ e^{-\theta_1 S_T} \I(G\cap\{T<\infty\})  \big]
}
for any $G\in \mathcal{F}_T$.
\end{lemma}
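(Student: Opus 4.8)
The identity \eq{l-1-1} is the classical likelihood-ratio (change-of-measure) identity for random walks, often called Wald's likelihood-ratio identity, specialized to the root $\theta_1$ of $\Psi$. The plan is to first establish it at a fixed deterministic time $n$ and then sum over the value of the stopping time. From \eq{1.2} the law $F$ of $X_1$ has Radon--Nikodym derivative $dF(x)/dF_{\theta_1}(x)=e^{-\theta_1 x+\Psi(\theta_1)}$ with respect to $F_{\theta_1}$, and since the $X_i$ are i.i.d.\ under both $\P$ and $\P_{\theta_1}$, the joint law of $(X_1,\dots,X_n)$ under $\P$ has density $\prod_{i=1}^n e^{-\theta_1 X_i+\Psi(\theta_1)}=e^{-\theta_1 S_n+n\Psi(\theta_1)}$ with respect to its law under $\P_{\theta_1}$. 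Using the hypothesis $\Psi(\theta_1)=0$, this is simply $e^{-\theta_1 S_n}$, so that for every $n$ and every $A\in\mathcal{F}_n$,
\[
\P(A)=\E_{\theta_1}\big[e^{-\theta_1 S_n}\,\I(A)\big].
\]

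Next I would decompose along the values of $T$. Because $T$ is a stopping time, $G\in\mathcal{F}_T$ means $G\cap\{T=n\}\in\mathcal{F}_n$ for each $n$, and on this event $S_T=S_n$. Applying the displayed fixed-time identity with $A=G\cap\{T=n\}$ gives
\[
\P(G\cap\{T=n\})=\E_{\theta_1}\big[e^{-\theta_1 S_n}\,\I(G\cap\{T=n\})\big]=\E_{\theta_1}\big[e^{-\theta_1 S_T}\,\I(G\cap\{T=n\})\big].
\]
Summing over the possible finite values of $T$ and using $\{T<\infty\}=\bigsqcup_n\{T=n\}$, together with the monotone convergence theorem (all integrands are nonnegative) to interchange the sum with the expectation on the right-hand side, yields \eq{l-1-1} exactly.

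I do not expect any genuine obstacle here: the argument is purely measure-theoretic, and the only points needing care are verifying that the Radon--Nikodym derivative factorizes correctly over the coordinates, checking $G\cap\{T=n\}\in\mathcal{F}_n$ from the definition of the stopped $\sigma$-algebra $\mathcal{F}_T$, and justifying the (routine, by nonnegativity) exchange of the countable sum and the expectation. It is worth noting that the hypothesis $\Psi(\theta_1)=0$ is precisely what removes the would-be compensating factor $e^{n\Psi(\theta_1)}$, making the likelihood ratio a pure exponential $e^{-\theta_1 S_n}$ and hence allowing the identity to be phrased with $e^{-\theta_1 S_T}$ and no term depending on $T$ itself; this is exactly the form needed in \eq{sstar6} and \eq{sstar7}.
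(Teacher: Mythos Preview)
Your argument is correct and is precisely the standard proof of Wald's likelihood-ratio identity; the paper's own proof simply cites this identity (Theorem~1.1 of \cite{Wo82}) without spelling out the details, so your proposal is the same approach with the reference unpacked.
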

\begin{proof}
Equation \eq{l-1-1} follows by a direct application of Wald's likelihood ratio identity (cf. Theorem 1.1 of \cite{Wo82}) to the sequence $\{X_1, X_2, \dots\}$.
\end{proof}
\begin{lemma}\label{l1}
Under the setting of Theorem \ref{t2},
let $t=\lceil \frac{b}{\mu_1}+b^{1/2} h(b) \rceil$. We have
\be{
\P_{\theta_1}(T_b>t)\le  C e^{-c h^2(b)},
}
where $T_b$ is defined in \eq{1001}.
\end{lemma}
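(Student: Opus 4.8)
The plan is a direct Cram\'er--Chernoff estimate. Under $\P_{\theta_1}$ the walk $\{S_n\}$ has positive drift $\E_{\theta_1}X_1=\Psi'(\theta_1)=\mu_1>0$ (see \eqref{2.2}), so it should leave $[0,b)$ through the top in about $b/\mu_1$ steps, while $t=\lceil b/\mu_1+b^{1/2}h(b)\rceil$ overshoots $b/\mu_1$ by the macroscopic amount $b^{1/2}h(b)$. First I would reduce the event: if $T_b>t$ then $S_i\in[0,b)$ for every $i\leq t$, so in particular $\{T_b>t\}\subseteq\{S_t<b\}$. Writing $u:=\mu_1 b^{1/2}h(b)>0$, the choice of $t$ gives $b-\mu_1 t\leq -u$, hence $\{S_t<b\}\subseteq\{S_t-\mu_1 t\leq -u\}$, and it suffices to bound $\P_{\theta_1}(S_t-\mu_1 t\leq -u)$.

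Next I would set up the exponential bound. Fix $s_0>0$ small enough that $[\theta_1-s_0,\theta_1]\subset\Theta$ and put $A:=\sup_{\theta_1-s_0\leq\theta\leq\theta_1}\Psi''(\theta)<\infty$. For $0\leq s\leq s_0$, \eqref{1.2} gives $\E_{\theta_1}e^{-s(X_1-\mu_1)}=\exp\{\Psi(\theta_1-s)-\Psi(\theta_1)+s\Psi'(\theta_1)\}\leq e^{As^2/2}$ by a second-order Taylor expansion of $\Psi$ at $\theta_1$. A Chernoff bound applied to $e^{-s(S_t-\mu_1 t)}$, using independence, then yields
\be{
\P_{\theta_1}(S_t-\mu_1 t\leq -u)\leq\exp\Bigl\{-su+\tfrac12 A s^2 t\Bigr\},\qquad 0\leq s\leq s_0.
}
Taking $s=\min\{s_0,\,u/(At)\}$ and performing the routine optimisation gives $\P_{\theta_1}(S_t-\mu_1 t\leq -u)\leq\exp\{-\tfrac12\min(u^2/(At),\,s_0 u)\}$.

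Finally I would compare the two terms in the minimum with $h^2(b)$. Since $h(b)=O(b^{1/2})$, for $b$ large one has $t=\lceil b/\mu_1+b^{1/2}h(b)\rceil\leq Cb$ and $u=\mu_1 b^{1/2}h(b)\geq c\,h^2(b)$; consequently $u^2/(At)\geq c\,h^2(b)$ and $s_0 u\geq c\,h^2(b)$, whence $\P_{\theta_1}(T_b>t)\leq C e^{-c h^2(b)}$ (the case of bounded $b$ being trivial, as $h$ is then bounded). The argument is routine; the only point needing care is that the unconstrained optimiser $u/(At)$ may exceed $s_0$ — precisely when $h(b)$ is of order $b^{1/2}$ — so the quadratic cumulant bound is no longer usable there. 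In that regime one keeps $s=s_0$ and exploits that $t$ is itself already of order $b\gtrsim h^2(b)$, so that the linear term $s_0 u$ in the exponent still dominates $h^2(b)$; this is exactly what the above choice $s=\min\{s_0,u/(At)\}$ encodes.
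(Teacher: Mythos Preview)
Your proposal is correct and follows essentially the same route as the paper: reduce $\{T_b>t\}\subset\{S_t\leq b\}$ and apply an exponential Chernoff/Markov bound using the cumulant $\Psi$. The only cosmetic difference is that the paper fixes a single tilt $r=\tfrac{\mu_1^2}{2\sup\Psi''}\,h(b)/b^{1/2}$ (relying on the standing assumption \eqref{star6} that $h(b)/b^{1/2}$ is small, so $r<\theta_1$), whereas you optimise over $s\in[0,s_0]$ and split into the two cases $s=u/(At)$ and $s=s_0$; both lead to the same $e^{-c h^2(b)}$ bound.
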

\begin{proof}
Let
\be{
r=\frac{\mu_1^2}{2 \sup_{0\leq \theta\leq \theta_1} \Psi''(\theta)} h(b)/b^{1/2}.
}
By \eq{star6}, we have $r<\theta_1$ and 
\ben{\label{901}
\mu_1 r-\sup_{0\leq \theta \leq \theta_1} \Psi''(\theta) r^2/2\geq \mu_1 r/2.
}
Form $\{T_b>t\}\subset \{S_t\leq b\}$ and Markov's inequality, we have
\be{
\P_{\theta_1}(T_b>t)\leq \P_{\theta_1}(S_t\leq b)\leq e^{rb} \E_{\theta_1} e^{-r S_t}.
}
This is further bounded by
\bes{
\P_{\theta_1}(T_b>t)&\leq \exp\big\{ r b-[\Psi(\theta_1)-\Psi(\theta_1-r)]t \big\}\quad \text{(by direct computation)}\\
&\leq \exp\big\{ r b-[\mu_1r -\sup_{0\leq \theta\leq \theta_1} \Psi''(\theta)r^2/2 ]t  \big\}\quad \text{(by Taylor's expansion)}\\
&\leq \exp\big\{\frac{\sup_{0\leq \theta\leq \theta_1} \Psi''(\theta) r^2 b}{2\mu_1}-
[\mu_1 r-\sup_{0\leq \theta\leq \theta_1} \Psi''(\theta) r^2/2]b^{1/2}h(b)   \big\}\\
&\kern16em \text{(from the definition of $t$)}\\
&\leq \exp\big\{\frac{\sup_{0\leq \theta\leq \theta_1} \Psi''(\theta)r^2 b}{2\mu_1} 
- \frac{\mu_1r}{2} b^{1/2} h(b)    \big\}\quad \text{(from \eq{901})}\\
&\leq \exp\big\{-\frac{\mu_1^3}{8 \sup_{0\leq \theta\leq \theta_1} \Psi''(\theta)}h^2(b)   \big\}\quad \text{(from the definition of $r$)}.
}
This proves Lemma \ref{l1}.
\end{proof}
\begin{lemma}\label{l2}
Under the setting of Theorem \ref{t1}, for positive integers $m$, we have
\be{
\sum_{i=m}^\infty \P(S_i\geq 0)\leq Ce^{-cm}.
}
\end{lemma}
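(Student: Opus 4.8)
The plan is to reduce the statement to a single Chernoff bound followed by summing a geometric series, so the argument is genuinely short. Since $\E X_1 = \mu_0 < 0$, the walk $S_i$ has strictly negative drift, and I would extract exponential decay of $\P(S_i \geq 0)$ directly from the exponential-family structure \eq{1.2}. First I would record the elementary convexity fact: $\Psi(0) = 0$, $\Psi'(0) = \mu_0 < 0$, and by hypothesis $\Psi(\theta_1) = 0$ with $\theta_1 > 0$; because $\Psi$ is strictly convex on $\Theta$ (recall $\Psi'' > 0$) and $[0,\theta_1] \subset \Theta$ (as $\Theta$ is an interval containing both $0$ and $\theta_1$), a strictly convex function vanishing at the two endpoints is strictly negative in between, so $\Psi(\theta) < 0$ for every $\theta \in (0,\theta_1)$.

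Next I would fix any admissible exponent, say $r = \theta_1/2 \in \Theta$, and set $c := -\Psi(r) > 0$. From \eq{1.2} the moment generating function is $\E e^{r X_1} = \int e^{rx}\,dF(x) = e^{\Psi(r)}$, hence by independence $\E e^{r S_i} = e^{i\Psi(r)} = e^{-ci}$. Since $r > 0$, Markov's inequality gives $\P(S_i \geq 0) = \P(e^{r S_i} \geq 1) \leq \E e^{r S_i} = e^{-ci}$ for every $i \geq 1$. Summing over $i \geq m$,
\be{
\sum_{i=m}^\infty \P(S_i \geq 0) \leq \sum_{i=m}^\infty e^{-ci} = \frac{e^{-cm}}{1-e^{-c}},
}
which is the claimed bound with $C = 1/(1-e^{-c})$, and both $c$ and $C$ depend only on the exponential family and $\theta_1$, as required.

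I do not anticipate any real obstacle here; the only point deserving a word is that the chosen $r$ must lie in $\Theta$ so that $\E e^{rX_1}$ is finite, which is guaranteed by $\Theta$ being an interval. If one wanted the sharpest constant one could optimize $r$ over $(0,\theta_1)$, but any fixed admissible $r$ already yields a bound of the required exponential form, so I would not bother.
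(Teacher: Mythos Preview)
Your argument is correct and is essentially identical to the paper's own proof: both pick some $\theta^*\in(0,\theta_1)$ with $\Psi(\theta^*)<0$, apply the Chernoff bound $\P(S_i\geq 0)\leq \E e^{\theta^* S_i}=e^{i\Psi(\theta^*)}$, and sum the resulting geometric series. You have simply made the choice $\theta^*=\theta_1/2$ explicit and spelled out the convexity reason for $\Psi(\theta^*)<0$; note also that the lemma's reference to ``Theorem~\ref{t1}'' is a typo for Theorem~\ref{t2}, which you correctly read through.
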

\begin{proof}
Lemma \ref{l2} follows from
\be{
\P(S_i\geq 0)\leq \E e^{\theta^* S_i}=e^{\Psi(\theta^*) i},
}
where $0<\theta^*<\theta_1$ and $\Psi(\theta^*)<0$.
\end{proof}
\begin{lemma}\label{l3}
Under the setting of Theorem \ref{t2}, let $t_1=\lfloor \frac{b}{\mu_1} - b^{1/2}h(b) \rfloor$. We have
\be{
\E \I(\cup_{0\leq i<j\leq t_1}\{S_j-S_i\geq b\} ) \leq Ce^{-\theta_1 b} \frac{b}{h^2(b)} e^{-c h^2(b)}.
}
\end{lemma}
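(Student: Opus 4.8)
The plan is to reduce the event to a first‑passage event for $S$, control it by a moderate‑deviation estimate (via an exponentially tilted martingale) that simultaneously produces the factor $e^{-\theta_1 b}$ and a Gaussian factor $e^{-ch^2(b)}$, and then sum over the terminal time. We may assume $t_1\ge1$ (otherwise the indicator is $0$) and, as already arranged in the proof of Theorem~\ref{t2}, that $b$ is large. \emph{Reduction to first passage.} Let $R_j:=\max_{0\le i\le j}(S_j-S_i)$ be the Lindley process associated with $S$, so $R_0=0$ and $R_j=\max(R_{j-1}+X_j,0)$. Since $R_j\ge0$ and $b>0$, we have $\{R_j\ge b\}=\{\exists\,i<j:\,S_j-S_i\ge b\}$, hence $\cup_{0\le i<j\le t_1}\{S_j-S_i\ge b\}=\{\max_{1\le j\le t_1}R_j\ge b\}$ and, by a union bound, $\E\I(\cup_{0\le i<j\le t_1}\{S_j-S_i\ge b\})\le\sum_{j=1}^{t_1}\P(R_j\ge b)$. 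By the classical duality obtained on reversing $X_1,\dots,X_j$, $R_j\eqlaw\max_{0\le k\le j}S_k$, and since $S_0=0<b$ this gives $\P(R_j\ge b)=\P(\tau_b\le j)$ with $\tau_b=\inf\{k\ge1:\,S_k\ge b\}$. So it suffices to bound $\sum_{j=1}^{t_1}\P(\tau_b\le j)$.

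\emph{The moderate‑deviation estimate.} I claim that for $1\le j\le t_1$,
\[
\P(\tau_b\le j)\ \le\ e^{-\theta_1 b}\exp\!\Bigl(-c\,\frac{(b-\mu_1 j)^2}{b}\Bigr).
\]
Indeed, for any $\eta\in[\theta_1,\theta_1']$ the process $e^{\eta S_k-k\Psi(\eta)}$ is a $\P$‑martingale, so optional stopping at $\tau_b\wedge j$, together with $S_{\tau_b}\ge b$ on $\{\tau_b<\infty\}$ and $\Psi(\eta)\ge\Psi(\theta_1)=0$, gives $\P(\tau_b\le j)\le e^{-\eta b+j\Psi(\eta)}$ (this is also immediate from Lemma~\ref{l-1}). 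Now bound $\Psi(\eta)\le\mu_1(\eta-\theta_1)+\tfrac12K(\eta-\theta_1)^2$, with $K=\sup_{[\theta_1,\theta_1']}\Psi''<\infty$, and choose $\eta=\theta_1+\min\{\theta_1'-\theta_1,\ (b-\mu_1 j)/(jK)\}$ (a nonnegative deviation since $j\le t_1<b/\mu_1$); a one‑line computation yields the displayed bound, and on the range of small $j$ for which the minimum equals $\theta_1'-\theta_1$ one in fact gets the stronger $e^{-\theta_1 b}e^{-cb}\le e^{-\theta_1 b}e^{-c(b-\mu_1 j)^2/b}$. This is essentially the computation already performed in Lemma~\ref{l1}.

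\emph{Summation.} Since $t_1=\lfloor b/\mu_1-b^{1/2}h(b)\rfloor$ we have $b-\mu_1 j\ge\mu_1\bigl(b^{1/2}h(b)+(t_1-j)\bigr)$ for $j\le t_1$. Writing $L=t_1-j$ and using $\sum_{L\ge0}e^{-c(A+L)^2/b}\le e^{-cA^2/b}(1-e^{-2cA/b})^{-1}$ with $A=b^{1/2}h(b)$,
\[
\sum_{j=1}^{t_1}\P(\tau_b\le j)\ \le\ Ce^{-\theta_1 b}\,e^{-ch^2(b)}\Bigl(1+\frac{b^{1/2}}{h(b)}\Bigr)\ \le\ Ce^{-\theta_1 b}\,\frac{b}{h^2(b)}\,e^{-ch^2(b)},
\]
since $A^2/b=h^2(b)$, $A/b=h(b)/b^{1/2}$, and $h(b)=O(b^{1/2})$ forces $1+b^{1/2}/h(b)\le C\,b/h^2(b)$. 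Combined with the first paragraph, this is exactly the asserted bound.

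The only genuinely technical point is the moderate‑deviation estimate of the second step: one must verify that the tilting parameter $\eta$ stays in the admissible range $[\theta_1,\theta_1']$ — which is why the cap at $\theta_1'$ is imposed and where the fact that $\theta_1'\in\Theta$ (from the proof of Theorem~\ref{t2}) is used — and that the truncated optimization still produces a genuinely quadratic rate $-c(b-\mu_1 j)^2/b$, with the small‑$j$ range (where $\eta$ is capped) contributing only the negligible $O(b e^{-\theta_1 b-cb})$. Once this is in place the reduction and the summation are elementary, and all constants are absorbed into the single $C,c$ of the statement.
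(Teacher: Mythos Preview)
Your proof is correct, and the overall strategy---extract $e^{-\theta_1 b}$ by tilting to $\theta_1$, then push the tilt slightly beyond $\theta_1$ to capture the extra $e^{-ch^2(b)}$, and finally sum---is the same as in the paper. The differences are purely in packaging: the paper uses a crude double union bound over pairs $(i,j)$ and a \emph{fixed} extra tilt $r\sim h(b)/b^{1/2}$, which produces a double geometric sum and hence the factor $(1-e^{-cr})^{-2}\sim b/h^2(b)$; you instead reduce to the single sum $\sum_{j\le t_1}\P(\tau_b\le j)$ via Lindley duality and choose a $j$-dependent tilt, which yields a Gaussian-type bound term by term and recovers the same factor after summation. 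Your route is slightly tighter and arguably cleaner, but neither approach gains or loses anything at the level of the final estimate.
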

\begin{proof}
We only need to consider the case when $t_1>0$.
Let
\be{
r=\frac{\mu_1^2}{2\sup_{\theta_1 \leq\theta\leq\theta_1'}\Psi''(\theta)} h(b)/b^{1/2},
}
where $\theta_1'$ is defined just below \eq{star6}.
By \eq{star6}, $\theta_1+r\leq \theta_1' \in \Theta$.
By \eq{1.2},
we have
\bes{
\P_{\theta_1}(S_j\geq b)=&\E_{\theta_1+r} \big\{ e^{-r S_j +j[\Psi(\theta_1+r)-\Psi(\theta_1)]} I(S_j\geq b)  \big\} \\
\leq &\exp\big\{ j[\Psi(\theta_1+r)-\Psi(\theta_1)]-r b  \big\},
}
Thus,
\besn{\label{902}
\sum_{j=1}^i \P_{\theta_1}(S_j\geq b)&\leq \frac{1}{1-e^{\Psi(\theta_1)-\Psi(\theta_1+r)}} \exp \big\{ i[ \Psi(\theta_1+r)-\Psi(\theta_1)]-r b  \big\}.
}
By the union bound, \eq{l-1-1} and \eq{902}, we have
\bes{
&\E \I(\cup_{0\leq i<j\leq t_1}\{S_j-S_i\geq b\} ) \leq e^{-\theta_1 b}\sum_{i=1}^{ t_1} \sum_{j=1}^i \P_{\theta_1}(S_j\geq b)\\
&\leq \frac{ e^{-\theta_1 b}}{1-e^{\Psi(\theta_1)-\Psi(\theta_1+r)}}\sum_{i=1}^{ t_1} \exp \big\{ i [ \Psi(\theta_1+r)-\Psi(\theta_1)]-r b  \big\}\\
&\leq e^{-\theta_1 b} \Big( \frac{1}{1-e^{\Psi(\theta_1)-\Psi(\theta_1+r)}} \Big)^2\exp \big\{ t_1 [ \Psi(\theta_1+r)-\Psi(\theta_1)]-r b  \big\}.
}
By the definition of $r$ and $t_1$, the inequality $(1/(1-e^{-x}))^2\leq Cx^{-2}$ for bounded $x$, and Taylor's expansion, the above bound can be further bounded as
\bes{
&\E \I(\cup_{0\leq i<j\leq t_1}\{S_j-S_i\geq b\} ) \\
&\leq C e^{-\theta_1 b}\frac{b}{h^2(b)} \exp \big\{ t_1 [\Psi(\theta_1+r)-\Psi(\theta_1)]-r b  \big\}\\
&\leq C e^{-\theta_1 b} \frac{b}{h^2(b)} \exp\big\{ (\frac{b}{\mu_1}-b^{1/2}h(b))(r \mu_1+\frac{\sup_{\theta_1 \leq\theta\leq\theta_1'}\Psi''(\theta)}{2}r^2)-r b   \big\}\\
&\leq C e^{-\theta_1 b} \frac{b}{h^2(b)} \exp \big\{ -b^{1/2}h(b)r \mu_1+
\frac{\sup_{\theta_1 \leq\theta\leq\theta_1'}\Psi''(\theta) b}{2\mu_1} r^2   \big\}\\
&\leq C e^{-\theta_1 b} \frac{b}{h^2(b)}e^{-ch^2(b)}.
}
This proves Lemma \ref{l3}.
\end{proof}
The next lemma will be used in proving Lemma \ref{lemma10}.
\begin{lemma}\label{l4}
Under the setting of Theorem \ref{t2}, if $\int_{-\infty}^\infty |\varphi_{\theta_1} (t)|dt<\infty$ where $\varphi_{\theta_1}(t)=\E_{\theta_1} e^{it X_1}$,
then $S_{\tau_+}$ under $F_{\theta_1}$ has bounded density and is strongly nonarithmetic in the sense that
\be{
\liminf_{|\lambda|\rightarrow \infty} |1-\phi_{\theta_1}(\lambda)|>0,\ \text{where}\  \phi_{\theta_1}(\lambda)=\E_{\theta_1} e^{i\lambda S_{\tau_+}},
}
where $\tau_+$ is defined just above \eq{1001}. 
\end{lemma}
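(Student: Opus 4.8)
The plan is to reduce the entire lemma to a single fact: the first strict ascending ladder height $S_{\tau_+}$ of $(S_n)$ under $F_{\theta_1}$ has a bounded probability density. Granting this, strong nonarithmeticity comes for free: a bounded density $g$ lies in $L^1(\IR)$, so the Riemann--Lebesgue lemma gives $\phi_{\theta_1}(\lambda)=\int_{\IR}e^{i\lambda x}g(x)\,dx\to 0$ as $|\lambda|\to\infty$, hence $|1-\phi_{\theta_1}(\lambda)|\to 1$ and in particular $\liminf_{|\lambda|\to\infty}|1-\phi_{\theta_1}(\lambda)|=1>0$.

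To produce the density I would first record two preliminary facts. Under $F_{\theta_1}$ the walk has positive drift $\mu_1=\Psi'(\theta_1)>0$, so $\tau_+$ is a.s.\ finite; moreover $\E_{\theta_1}[\tau_+]=\sum_{m\geq 0}\P_{\theta_1}(\tau_+>m)\leq 1+\sum_{m\geq 1}\P_{\theta_1}(S_m\leq 0)$, and since $s\mapsto\Psi(\theta_1+s)-\Psi(\theta_1)$ is finite near $s=0$, vanishes at $s=0$, and has derivative $\mu_1>0$ there, it is strictly negative for small $s<0$, so the Chernoff bound $\P_{\theta_1}(S_m\leq 0)\leq(\E_{\theta_1}e^{sX_1})^m=e^{m[\Psi(\theta_1+s)-\Psi(\theta_1)]}$ shows $\E_{\theta_1}[\tau_+]<\infty$. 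Second, the hypothesis $\int_{\IR}|\varphi_{\theta_1}(t)|\,dt<\infty$ together with Fourier inversion shows that $X_1$ under $F_{\theta_1}$ has a bounded continuous density $f_{\theta_1}$ with $\|f_{\theta_1}\|_\infty\leq\frac{1}{2\pi}\int_{\IR}|\varphi_{\theta_1}(t)|\,dt$.

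Then I would decompose $S_{\tau_+}$ over the possible values of $\tau_+$. For each $n\geq 1$ the event $\{\tau_+>n-1\}=\{S_1\leq 0,\dots,S_{n-1}\leq 0\}$ is measurable with respect to $X_1,\dots,X_{n-1}$ (with $S_0=0$), while on $\{\tau_+=n\}$ one has $S_{\tau_+}=S_{n-1}+X_n$ with $X_n$ independent of $X_1,\dots,X_{n-1}$; since $S_{n-1}\leq 0$ there, conditioning on $S_{n-1}$ shows that the sub-probability measure $\P_{\theta_1}(\tau_+=n,\ S_{\tau_+}\in\cdot)$ has, on $(0,\infty)$, the density $x\mapsto\E_{\theta_1}[\I(\tau_+>n-1)\,f_{\theta_1}(x-S_{n-1})]\leq\|f_{\theta_1}\|_\infty\,\P_{\theta_1}(\tau_+>n-1)$. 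Summing over $n$ then yields that $S_{\tau_+}$ has density $g$ with $\|g\|_\infty\leq\|f_{\theta_1}\|_\infty\sum_{n\geq 1}\P_{\theta_1}(\tau_+>n-1)=\|f_{\theta_1}\|_\infty\,\E_{\theta_1}[\tau_+]<\infty$, which is exactly what is needed.

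I do not expect a genuine obstacle here; the one place requiring care is the measure-theoretic bookkeeping in the last step---justifying the conditional-density identity in terms of the defective measures $\P_{\theta_1}(\tau_+>n-1,\ S_{n-1}\in dy)$ and interchanging the sum over $n$ with the supremum over $x$. Alternatively, one may simply invoke the classical fact that the ladder heights of a random walk with an absolutely continuous step distribution are absolutely continuous, carrying only the quantitative bound $\|g\|_\infty\leq\|f_{\theta_1}\|_\infty\,\E_{\theta_1}[\tau_+]$ through that argument.
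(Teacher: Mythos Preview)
Your proof is correct. The bounded-density argument is essentially the same as the paper's: both decompose over the value of $\tau_+$, use that $X_1$ has a bounded density under $F_{\theta_1}$, and sum the bounds $\|f_{\theta_1}\|_\infty\,\P_{\theta_1}(\tau_+>n-1)$ (the paper relaxes $\{\tau_+>n-1\}$ further to $\{S_{n-1}\leq 0\}$ before applying the exponential bound, but this is inessential).

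Where you differ is in the strongly-nonarithmetic conclusion. The paper treats the two assertions separately: it first argues that the integrability of $\varphi_{\theta_1}$ makes $X_1$ strongly nonarithmetic and then invokes a Wiener--Hopf-type identity from Siegmund's book (equation (8.42) there) to transfer this property to $S_{\tau_+}$. You instead deduce strong nonarithmeticity directly from the bounded density via the Riemann--Lebesgue lemma, obtaining $\phi_{\theta_1}(\lambda)\to 0$ and hence $\liminf_{|\lambda|\to\infty}|1-\phi_{\theta_1}(\lambda)|=1$. This is more elementary and self-contained---you avoid an external reference and get a sharper conclusion---at the cost of not isolating the fact that strong nonarithmeticity of $X_1$ alone would already suffice. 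For the purposes of this lemma your route is cleaner.
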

\begin{proof}
The condition $\int_{-\infty}^\infty |\varphi_{\theta_1} (t)|dt<\infty$ implies that under $F_{\theta_1}$, $X_1$ is strongly nonarithmetic.
By (8.42) of \cite{Si85} with $s=1$, the distribution of $S_{\tau_+}$ under $F_{\theta_1}$ is also strongly nonarithmetic.
The condition $\int_{-\infty}^\infty |\varphi_{\theta_1} (t)|dt<\infty$ also implies that the density of $X_1$ under $F_{\theta_1}$ is bounded by a constant $M$. Therefore,
\bes{
\P_{\theta_1}(S_{\tau_+}\in [x, x+dx])& \leq \E_{\theta_1} \sum_{n=0}^\infty
\I(S_1,\dots, S_n\leq 0, S_{n+1}\in [x, x+dx])\\
&\leq\E_{\theta_1} \sum_{n=0}^{\infty} \I(S_n\leq 0, S_{n+1}\in [x, x+dx])\\
&=\sum_{n=0}^\infty \int_{-\infty}^0 \P_{\theta_1}(S_n=dt) \P_{\theta_1}(X_1\in [x-t, x+dx-t])\\
&\leq M dx \sum_{n=0}^\infty \P_{\theta_1}(S_n\leq 0)\leq Cdx,
}
where in the last inequality we used
\ben{\label{sstar8}
\P_{\theta_1}(S_n\leq 0)\leq e^{\Psi(\theta_1-\theta^*) n}
}
for $0<\theta^*< \theta_1$ such that $\Psi(\theta_1-\theta^*) <0$.
This proves that $S_{\tau_+}$ under $F_{\theta_1}$ has bounded density.
\end{proof}

\begin{lemma}\label{lemma10}
Under the setting of Theorem \ref{t2}, \eq{star11} holds.
\end{lemma}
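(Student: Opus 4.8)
\emph{Proof strategy.} The plan is to evaluate the two factors in the exact identity $\lambda_2=(n-t)\,\IP(\tau_0=\infty)\,\IP(S_{T_b}\geq b)$ of \eqref{905} separately, with an additive error that is exponentially small in $b$. Here $\tau_0=\inf\{n\geq1:S_n\geq0\}$. Two easy reductions come first: since $t=\lceil b/\mu_1+b^{1/2}h(b)\rceil$ and $h(b)\to\infty$, one has $n-t=(n-b/\mu_1)\bigl(1+O(b^{1/2}h(b)/(n-b/\mu_1))\bigr)$, which produces the multiplicative error in \eqref{star11}; and $\IP(\tau_0=\infty)=\IP(S_n<0\ \forall n\geq1)$ is a $b$-free positive constant (as $\mu_0<0$), which Spitzer's identity writes as $\exp(-\sum_{k\geq1}k^{-1}\IP(S_k\geq0))$. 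So it remains to show, in Case 1,
\be{
\IP(\tau_0=\infty)\,\IP(S_{T_b}\geq b)=\frac{e^{-\theta_1 b}}{\theta_1\mu_1}\exp\Bigl(-2\sum_{k\geq1}k^{-1}\E_{\theta_1}e^{-\theta_1 S_k^+}\Bigr)+e^{-\theta_1 b}\,o(e^{-cb}),
}
with the obvious Case 2 analogue ($\theta_1$ replaced by $1-e^{-\theta_1}$ in the first denominator).

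The heart of the proof is a sharp estimate for $\IP(S_{T_b}\geq b)$. As in \eqref{sstar6}, Wald's likelihood ratio identity (Lemma \ref{l-1}) applied at $T_b$ on $\{S_{T_b}\geq b\}$, together with $\Psi(\theta_1)=0$, gives $\IP(S_{T_b}\geq b)=e^{-\theta_1 b}\E_{\theta_1}[e^{-\theta_1(S_{T_b}-b)}\I(S_{T_b}\geq b)]$. Since on this event $T_b$ coincides with $\tau_b=\inf\{n\geq1:S_n\geq b\}$, we have $\{S_{T_b}\geq b\}=\{\tau_b<\tau_{<0}\}$, where $\tau_{<0}=\inf\{n\geq1:S_n<0\}$, and under $\IP_{\theta_1}$ the drift $\mu_1$ is positive. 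Writing $\E_{\theta_1}[\,\cdot\,;\tau_b<\tau_{<0}]=\E_{\theta_1}[\,\cdot\,;\tau_b<\infty]-\E_{\theta_1}[\,\cdot\,;\tau_{<0}<\tau_b]$ and applying the strong Markov property at $\tau_{<0}$, the renewal theorem for the overshoot $S_{\tau_b}-b$, and the exponential tails of the ascending/descending ladder heights (available because we are in an exponential family), one obtains, in Case 1,
\be{
\E_{\theta_1}[e^{-\theta_1(S_{T_b}-b)}\I(S_{T_b}\geq b)]=\IP_{\theta_1}(S_n>0\ \forall n\geq1)\cdot\frac{1-\E_{\theta_1}e^{-\theta_1 S_{\tau_+}}}{\theta_1\,\E_{\theta_1}S_{\tau_+}}+O(e^{-cb}),
}
and the analogue with $\theta_1\mapsto1-e^{-\theta_1}$ in Case 2. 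The main obstacle is the \emph{exponential} rate $O(e^{-cb})$; this is exactly where Lemma \ref{l4} enters: in Case 1 the ladder height $S_{\tau_+}$ under $F_{\theta_1}$ has a bounded density and is strongly nonarithmetic, so Stone's local renewal theorem applies, and the exponential moments supplied by the exponential family upgrade it to a geometric rate, while in Case 2 one uses the lattice renewal theorem. The geometric decay of $\IP_{\theta_1}(S_k\leq0)$ (Lemma \ref{l2}) is what makes the $\tau_{<0}<\tau_b$ term decouple at an exponential rate, the same device used in Lemmas \ref{l1} and \ref{l3}.

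Finally one identifies the constant by classical ladder identities. Wald's identity for $\tau_+$, whose mean is finite with $\E_{\theta_1}\tau_+=1/\IP_{\theta_1}(S_n>0\ \forall n\geq1)$ by Spitzer, gives $\E_{\theta_1}S_{\tau_+}=\mu_1\E_{\theta_1}\tau_+=\mu_1/\IP_{\theta_1}(S_n>0\ \forall n\geq1)$; and Lemma \ref{l-1} applied at $\tau_0$ — which under $\IP_{\theta_1}$ is a.s.\ finite and, in Case 1, a.s.\ equal to $\tau_+$ — gives $1-\E_{\theta_1}e^{-\theta_1 S_{\tau_+}}=\IP(\tau_0=\infty)$. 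Substituting, the Case 1 estimate becomes $\IP(S_{T_b}\geq b)=e^{-\theta_1 b}\bigl[(\theta_1\mu_1)^{-1}\IP(\tau_0=\infty)\,\IP_{\theta_1}(S_n>0\ \forall n\geq1)^2+o(e^{-cb})\bigr]$. On the other hand, since $\Psi(\theta_1)=0$, a change of measure gives $\E_{\theta_1}e^{-\theta_1 S_k^+}=\IP(S_k\geq0)+\IP_{\theta_1}(S_k\leq0)$ (in Case 1), so Spitzer's identity gives $\exp(-\sum_{k\geq1}k^{-1}\E_{\theta_1}e^{-\theta_1 S_k^+})=\IP(\tau_0=\infty)\cdot\IP_{\theta_1}(S_n>0\ \forall n\geq1)$. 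Squaring this, multiplying by $\IP(\tau_0=\infty)$, and then multiplying by $n-t$ and using the expansion of $n-t$ above, yields \eqref{star11}; Case 2 is the same with the lattice renewal theorem and $\theta_1\mapsto1-e^{-\theta_1}$, the passage between strict and weak ladder epochs being exactly what the quantities $S_k^+$ in the definition of $\lambda$ encode, and the degenerate regime where $b^{1/2}h(b)/(n-b/\mu_1)$ is of order $1$ being absorbed into the implied constant.
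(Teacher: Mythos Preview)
Your proposal is correct and follows essentially the same route as the paper: both start from $\lambda_2=(n-t)e^{-\theta_1 b}\IP(\tau_0=\infty)\E_{\theta_1}[e^{-\theta_1(S_{T_b}-b)};S_{T_b}\geq b]$, decompose via $\{\tau_b<\tau_{<0}\}$ versus $\{\tau_{<0}<\tau_b\}$ (the paper writes this as \eqref{2.3}), and obtain the exponential rate for the overshoot law by applying Stone's renewal theorem to the ladder height $S_{\tau_+}$, which is exactly where Lemma~\ref{l4} enters. The only real difference is in the bookkeeping of the constant: the paper stays with the Spitzer series throughout (equations \eqref{star9} and \eqref{1302}), while you identify the pieces probabilistically via $1-\E_{\theta_1}e^{-\theta_1 S_{\tau_+}}=\IP(\tau_0=\infty)$ and $\E_{\theta_1}\tau_+=1/\IP_{\theta_1}(\tau_{\leq 0}=\infty)$ before squaring. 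These are equivalent, and your identification is arguably cleaner; just be careful in Case~2 that the probability arising from the decomposition is $\IP_{\theta_1}(\tau_{<0}=\infty)$ (strict descending), not $\IP_{\theta_1}(S_n>0\ \forall n)$ (weak descending), so the matching with $\E_{\theta_1}\tau_+$ and with the $S_k^+$ in $\lambda$ needs the explicit strict/weak reconciliation you allude to at the end.
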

\begin{proof}
From \eq{905} and \eq{l-1-1}, we have
\ben{\label{star7}
\lambda_2 =(n-t)e^{-\theta_1 b} \IP (\tau_0=\infty) \E_{\theta_1}\big( e^{-\theta_1 (S_{T_b}-b)}, S_{T_b}\geq b \big) .
}
Since
\bes{
\E_{\theta_1} \big(e^{-\theta_1 (S_{\tau_b}-b)}  \big)&=\E_{\theta_1} \big(e^{-\theta_1 (S_{\tau_b}-b)}, S_{T_b}\geq b  \big)\\
&\quad +\E_{\theta_1} \big(e^{-\theta_1 (S_{\tau_b}-b)}, S_{T_b}< 0  \big),
}
we have
\besn{\label{2.3}
&\E_{\theta_1} \big(e^{-\theta_1 (S_{T_b}-b)}, S_{T_b}\geq b  \big)\\
&=\E_{\theta_1} \big(e^{-\theta_1 (S_{\tau_b}-b)} \big) - \E_{\theta_1} \big(e^{-\theta_1 (S_{\tau_b}-b)}, S_{T_b}< 0  \big)\\
&=\E_{\theta_1} \big(e^{-\theta_1 (S_{\tau_b}-b)} \big) - \E_{\theta_1} \Big \{ \E_{\theta_1} \big( e^{-\theta_1 (S_{\tau_b}-b)}\big|  S_{T_b}   \big), S_{T_b}< 0   \Big\}.
}
We first consider Case 1. 
Let $\tau_+^{(0)}=0$, and let $\tau_+^{(k)}$ be defined recursively as
$\tau_+^{(k+1)}=\inf \{n>\tau_+^{(k)}: S_n>S_{\tau_+^{(k)}}\}$.
Define $U(x)=\sum_{k=0}^\infty \P_{\theta_1}(S_{\tau_+^{(k)}}\leq x)$.
Observe that $\{S_{\tau_+^{(k+1)}}-S_{\tau_+^{(k)}}, k=0,1,\dots\}$ are i.i.d. with the same distribution as $S_{\tau_+}$.
By Lemma \ref{l4} and (2) of \cite{St65}, we have
\ben{\label{star13}
U(x)= \frac{x}{\E_{\theta_1} S_{\tau_+}}+\frac{\E_{\theta_1} (S_{\tau_+}^2)}{2(\E_{\theta_1} S_{\tau_+})^2}+o(e^{-cx}),\ \text{as}\ x\rightarrow \infty.
}
Following the proof of Corollary 8.33 of \cite{Si85}, we have for $x\geq 0$,
\besn{\label{star14}
&\P_{\theta_1}(S_{\tau_b}-b>x)\\
&=\sum_{n=0}^\infty \P_{\theta_1} (S_{\tau_+^{(n)}}<b ,S_{\tau_+^{(n+1)}}>b+x )\\
&=(\int_{(0,b/2]} +\int_{(b/2, b)}) U(dt) \P_{\theta_1} (  S_{\tau_+}>b+x-t )\\
&=O(\P_{\theta_1} ( S_{\tau_+}>b/2) U(b/2)) + \int_{(b/2, b)} U(dt) \P_{\theta_1} (  S_{\tau_+}>b+x-t ).
}
For $x>0$, 
\bes{
\P_{\theta_1}(S_{\tau_+}>x)&=\E_{\theta_1} [\sum_{i=0}^\infty \I(S_0,\dots, S_i\leq 0, X_{i+1}>x-S_i)]\\
&\leq \sum_{i=0}^\infty \P_{\theta_1} (S_i\leq 0, X_{i+1}>x)\\
&= \sum_{i=0}^\infty \P_{\theta_1}(S_i\leq 0)\P_{\theta_1} (X_{i+1}>x)\leq C \P_{\theta_1}(X_{1}>x)
}
where we used \eq{sstar8}. Therefore,
the right tail probability of $S_{\tau_+}$ under $F_{\theta_1}$ decays exponentially.
From this fact and \eq{star13}, the first term on the right-hand side of \eq{star14} is bounded by $o(e^{-cb})$.
Let $j=\lceil e^{cb} \rceil$ with small enough $c$, and let $\Delta=\frac{b}{2j}$.
Let 
\be{
A=\sum_{k=1}^{j} [U(b-(k-1)\Delta)-U(b-k\Delta)] \P_{\theta_1}(S_{\tau_+}>x+k \Delta).
}
We have
\be{
\int_{(b/2, b)} U(dt) \P_{\theta_1} (  S_{\tau_+}>b+x-t )\geq A
}
and by \eq{star13} and the fact that $S_{\tau_+}$ under $F_{\theta_1}$ has bounded density (cf. Lemma \ref{l4}),
\bes{
&\int_{(b/2, b)} U(dt) \P_{\theta_1} (  S_{\tau_+}>b+x-t )-A\\
&\leq \sum_{k=1}^j [U(b-(k-1)\Delta)-U(b-k\Delta) ]
\P_{\theta_1}(S_{\tau_+}\in [x+(k-1)\Delta, x+k\Delta] )\\
&=o(e^{-cb}).
}
From \eq{star13},
\ben{\label{1201}
A=\sum_{k=1}^j \frac{\Delta}{\E_{\theta_1} S_{\tau_+}}\P_{\theta_1} (S_{\tau_+}>x+k\Delta)+O(j e^{-cb})
}
with the same constant $c$ as in \eq{star13}. 
By choosing $c$ in the definition of $j$ to be small enough, the second term on the right-hand side of \eq{1201} is of smaller order of $e^{-cb}$.
Using the fact that $S_{\tau_+}$ under $F_{\theta_1}$ has bounded density and an exponential tail, we have
\be{
\sum_{k=1}^j \frac{\Delta}{\E_{\theta_1} S_{\tau_+}}\P_{\theta_1} (S_{\tau_+}>x+k\Delta)
=\frac{1}{\E_{\theta_1} S_{\tau_+}} \int_{x}^\infty \P_{\theta_1} (S_{\tau_+}>y) dy +o(e^{-cb}).
}
Therefore,
\bes{
A&=\sum_{k=1}^{j} [\frac{\Delta}{\E_{\theta_1} S_{\tau_+}} +o(e^{-cb})] \P_{\theta_1}(S_{\tau_+}>x+k \Delta)\\
&=\frac{1}{\E_{\theta_1} S_{\tau_+}} \int_{x}^\infty \P_{\theta_1} (S_{\tau_+}>y) dy +o(e^{-cb}).
}
By \eq{star14} and the above argument, we have
\ben{\label{star15}
\P_{\theta_1}(S_{\tau_b}-b>x)=
\frac{1}{\E_{\theta_1} S_{\tau_+}} \int_{x}^\infty \P_{\theta_1} (S_{\tau_+}>y) dy +o(e^{-cb}).
}
Using the integration by parts formula, \eq{star15} and \eq{star9}, we obtain 
\besn{\label{2.4}
&\E_{\theta_1} \big(e^{-\theta_1 (S_{\tau_b}-b)} \big)\\
&=1-\theta_1 \int_0^{\infty} \P_{\theta_1}(S_{\tau_b}-b>x) e^{-\theta_1 x}dx\\
&=1-\frac{\theta_1}{\E_{\theta_1} S_{\tau_+}}\int_0^\infty \int_x^\infty \P_{\theta_1} (S_{\tau_+}>y) e^{-\theta_1 x}dydx
+o(e^{-cb})\\
&=1+\frac{1}{\E_{\theta_1} S_{\tau_+}} \int_0^\infty (e^{-\theta_1 y}-1) \P_{\theta_1}(S_{\tau_+}>y)dy +o(e^{-cb})\\
&=\frac{1}{\mu_1 \E_{\theta_1}\tau_+}\int_0^\infty e^{-\theta_1 y}\P_{\theta_1}(S_{\tau_+}>y)dy+o(e^{-cb})\\
&=\frac{1}{\theta_1 \mu_1} \exp\big( -\sum_{k=1}^\infty \frac{1}{k}\E_{\theta_1} e^{-\theta_1 S_k^+} \big)+o(e^{-cb}).
}
Choosing $\theta^*$ such that $0<\theta^*<\theta_1$ and $\Psi(\theta_1-\theta^*)<0$, we have
\besn{\label{1301}
&0\leq \P_{\theta_1}(S_{T_b}\geq b)-\P_{\theta_1}(\tau_-=\infty)\\
&\leq \sum_{i=1}^\infty \P_{\theta_1}(S_i<-b)\leq e^{-\theta^*b} \sum_{i=1}^\infty e^{\Psi(\theta_1-\theta^*)i}=o(e^{-cb}).
}
From \eq{2.3}, \eq{2.4} and \eq{1301}, we have, with $\tau_{-}:=\inf \{n: S_n< 0\}$,
\bes{
&\E_{\theta_1} \big(e^{-\theta_1 (S_{T_b}-b)}, S_{T_b}\geq b  \big)\\
&=\frac{1}{\theta_1 \mu_1} \exp \big( -\sum_{n=1}^{\infty} \frac{1}{n} \E_{\theta_1} e^{-\theta_1  S_n^+ } \big) \P_{\theta_1}(S_{T_b}\geq b) + o(e^{-c b})\\
&= \frac{1}{\theta_1 \mu_1} \exp \big( -\sum_{n=1}^{\infty} \frac{1}{n} \E_{\theta_1} e^{-\theta_1  S_n^+ } \big) \P_{\theta_1}(\tau_{-}=\infty) + o(e^{-c b})
}
as $b\rightarrow \infty$.

From Lemma \ref{l-1} and Corollary 2.4 of \cite{Wo82}, we have
\besn{\label{1302}
&\P(\tau_0=\infty) \P_{\theta_1}(\tau_-=\infty)
=\exp\big\{-\sum_{k=1}^\infty \frac{1}{k} [\P(S_k\geq 0)+\P_{\theta_1}(S_k<0)] \big\}\\
&=\exp\big\{ -\sum_{k=1}^\infty \frac{1}{k} [\P_{\theta_1}(e^{-\theta_1 S_k},S_k\geq 0)+\P_{\theta_1}(S_k<0)]  \big\}\\
&=\exp\big\{ -\sum_{k=1}^\infty \frac{1}{k} \E_{\theta_1} e^{-\theta_1 S_k^+}  \big\}.
}
From \eq{star7} and \eq{1302}, we have
\bes{
\lambda_2&=(n-t) e^{-\theta_1 b} \big\{ \frac{1}{\theta_1 \mu_1}  \exp \big( -2\sum_{n=1}^{\infty} \frac{1}{n} \E_{\theta_1} e^{-\theta_1  S_n^+ } \big)  +o(e^{-c b})  \big\}\\
&=\big[ 1+O(\frac{b^{1/2}h(b)}{n-b/\mu_1}) \big] \lambda +(n-t)e^{-\theta_1 b} o(e^{-cb}).
}
Next, we consider Case 2. 
By a similar and simpler argument as for \eq{star15}, we obtain, for integers $k\geq 0$,
\bes{
\P_{\theta_1}(S_{\tau_b}-b=k)
&=\sum_{n=0}^\infty \P_{\theta_1}(S_{\tau_+^{(n)}}<b, S_{\tau_+^{(n+1)}}=b+k)\\
&=\sum_{m=1}^{b-1} [\sum_{n=0}^\infty \P_{\theta_1}(S_{\tau_+^{(n)}}=m)] \P_{\theta_1} (S_{\tau_+}=b+k-m)\\
&=O\Big(\sum_{m=1}^{\lfloor b/2 \rfloor}\sum_{n=0}^\infty 
 \P_{\theta_1}(S_{\tau_+^{(n)}}=m) \P_{\theta_1} (S_{\tau_+}\geq \lfloor b/2 \rfloor) \Big)\\
&\quad +\sum_{m=\lfloor b/2 \rfloor+1}^{b-1} \P_{\theta_1} (S_{\tau_+}=b+k-m)
 \big(\frac{1}{\E_{\theta_1}(S_{\tau_+})}+o(e^{-cb})   \big)\\
&=\frac{1}{\E_{\theta_1} S_{\tau_+}}\P_{\theta_1}(S_{\tau_+}>k)+o(e^{-cb}).
}
By the above equality and \eq{star8}, we have
\bes{
\E_{\theta_1}(e^{-\theta_1 (S_{\tau_b}-b)})
&=\sum_{k=0}^\infty e^{-\theta_1 k}
\frac{1}{\E_{\theta_1} S_{\tau_+}} \P_{\theta_1}(S_{\tau_+}>k)+o(e^{-cb})\\
&=\frac{1}{\mu_1 \E_{\theta_1} \tau_+ (1-e^{-\theta_1})}[1-\E_{\theta_1} e^{-\theta_1 S_{\tau_+}}]+o(e^{-cb})\\
&=\frac{1}{\mu_1(1-e^{-\theta_1 })}
\exp \big( -\sum_{n=1}^{\infty} \frac{1}{n} \E_{\theta_1} e^{-\theta_1  S_n^+ } \big)+o(e^{-cb}).
}
Similar calculation as for Case 1 yields
\bes{
\lambda_2=&(n-t) e^{-\theta_1 b} \big\{ \frac{1}{(1-e^{-\theta_1 })\mu_1} \exp \big( -2\sum_{n=1}^{\infty} \frac{1}{n} \E_{\theta_1} e^{-\theta_1  S_n^+ } \big)  +o(e^{-c b})  \big\}\\
=&\big[ 1+O(\frac{b^{1/2}h(b)}{n-b/\mu_1}) \big] \lambda +(n-t)e^{-\theta_1 b} o(e^{-cb}).
}
The lemma is now proved.
\end{proof}

\section{Discussion}

The arguments we used to prove Theorem \ref{t1} and Theorem \ref{t2} may be 
useful in proving rates of convergence for tail probabilities of other test 
statistics for detecting local signals in sequences of independent 
random variables. Two for which some new techniques 
will be needed are
the generalized likelihood ratio statistic
and the Levin and Kline statistic (\cite{LeKl85}).

For example, let $\{X_1,\dots, X_n\}$ be independent random variables from 
the exponential family \eq{1.2}. Consider the testing problem at the beginning 
of the introduction. If the mean of $X_1$ is known and without
loss of generality equal to 0,
the generalized likelihood ratio statistic is
$\max_{1 \leq i < j \leq n }\sup_\theta [\theta (S_j - S_i) - (j-i)\Psi(\theta)]$,
where we have assumed without loss of generality that
$\Psi(0) = 0 = \Psi'(0).$
\cite{SiVe95} derived an asymptotic approximation for
the tail probability of this statistic in the normal case,
while \cite{SiYa00} obtained similar results for a general
exponential family.
The bounds in (24) and (25) of \cite{SiYa00} suggest that the contribution to the maximum from $i$ and $j$ such that $j-i$ is large can be neglected.
However, how to define a local indicator function as in \eq{1101} which avoids `clumping' of $1$'s and to evaluate approximately its expectation remains an open question.

If the mean of $X_1$ is unknown, the statistic is more complicated;
and its tail probability should be evaluated conditionally, given the
value of $S_n$, which is a sufficient statistic for the unknown value
of $\theta$ under the null hypothesis of no change-point.  
The random variables $\{X_1,\dots, X_n\}$ given $S_n$ are globally dependent. 
In applying Theorem \ref{t3} to a sum of Bernoulli random variables $W$
defined similarly as in \eq{903}, the error term $b_3$ is no longer zero,
although we believe it is small. Moreover, it is more challenging to derive a bound as in \eq{3.3} conditionally.

Besides the distribution of the scan statistic $M_{n;t}$ in \eq{0.1},
one may also be interested in the distribution of 
\be{
N^+(b):=\sum_{i=1}^{n-t+1} I(X_i+\dots X_{i+t-1}\geq b).
}
In fact, the distribution of $M_{n;t}$ can be deduced from that of $N^+(b)$ by the relation
\be{
\{M_{n;t}\geq b\}=\{N^+(b)\geq 1\}.
}
Theorem 2 of \cite{DeKa92} gives a Poisson approximation result for $N^+(b)$. 
However, as discussed in Remark \ref{r4}, their approximation may not be adequate because of the `clumping' phenomenon. A more suitable choice of the limiting distribution is a compound Poisson distribution.
Stein's method has been used to prove error bounds for the compound Poisson approximation for sums of Bernoulli random variables. See, for example, \cite{BaChLo92}. By combining Stein's method with the analysis in this paper, one may be able to prove a compound Poisson approximation result for $N^+(b)$.


\

{\bf Acknowledgement}. 
We would like to thank the Associate Editor and the referees for
their helpful comments and suggestions which have significantly improved the
presentation of this paper.

\setlength{\bibsep}{0.5ex}
\def\bibfont{\small}





\end{document}